\author{Francesca Angrisani}
\title{Autonomous functionals with asymptotic $(p,q)$-structure.}
\date{}
\newtheorem{thm}{Theorem}[]
\newtheorem{cor}[thm]{Corollary}
\newtheorem{lem}[thm]{Lemma}
\newtheorem{prop}[thm]{Proposition}
\theoremstyle{definition}
\newtheorem*{mydef}{Definition}
\theoremstyle{remark}
\theoremstyle{remark}
\newtheorem{rmk}{Remark}
\DeclareMathOperator*{\Reg}{\text{Reg}}
\def\Xint#1{\mathchoice 
	{\XXint\displaystyle\textstyle{#1}}%
	{\XXint\textstyle\scriptstyle{#1}}%
	{\XXint\scriptstyle\scriptscriptstyle{#1}}%
	{\XXint\scriptscriptstyle\scriptscriptstyle{#1}}%
	\!\int} 
\def\XXint#1#2#3{{\setbox0=\hbox{$#1{#2#3}{\int}$} 
		\vcenter{\hbox{$#2#3$}}\kern-.5\wd0}} 
\def\fint{\Xint -}
\begin{document}
\begin{abstract}
	We obtain local Lipschitz regularity for minima of autonomous integrals in the calculus of variations, assuming $q$-growth hypothesis and $W^{1,p}$-quasiconvexity only asymptotically, both in the sub-quadratic and the super-quadratic case. 
\end{abstract}

	\maketitle 
	\medskip
	\medskip
	\section{Introduction}
	In this paper we study variational integrals of the type
	$$\mathcal{F}(u)=\int_\Omega f(Du(x))\,dx \quad \text{ for } u:\Omega \to \mathbb{R}^N$$
	where $\Omega$ is an open bounded set in $\mathbb{R}^n$, $n\ge 2$, $N\ge 1$. Here, the Lagrangian function $f: \mathbb{R}^{Nn} \to \mathbb{R}$ is a continuous function satisfying the following properties:
	\begin{itemize}
		\item[(A.1)] \textbf{Regularity}- $f \in C^2(\mathbb{R}^{nN},\mathbb{R})$
		\item[(A.2)] \textbf{$q$-Growth}- $|f(z)|\le \Gamma(1+|z|)^q$
		\item[(A.3)] \textbf{Asymptotical strict $W^{1,p}$-quasiconvexity}- There exists $M>>0$, $\gamma>0$ and a continuous function $g \in W^{1,p}$ such that $$f(z)=g(z),\quad \forall z:\, |z|>M$$ and such that $g$ is strictly $W^{1,p}$-quasiconvex, i.e. satisfies $$\fint_{B_1} g(z+D\varphi)\ge g(z)+\gamma\fint_{B_1}(1+|D\varphi|^2)^{\frac{p}{2}-1}|D\varphi|^{2},\quad \forall z,\, \forall \varphi \in C_0^\infty(B_1,\mathbb{R}^N)$$  
	\end{itemize}
where in this text, $p$ and $q$ will always denote real numbers that satisfy the inequalities $$1< p \le q<p+\frac{\min\{2,p\}}{2n}$$.\\
We will study local $W^{1,p}$-minimizers of $\mathcal{F}$ i.e. functions $u \in W^{1,p}(\Omega,\mathbb{R}^N)$ such that
$$\mathcal{F}(u+v)\ge \mathcal{F}(u) \quad \forall v \in W_0^{1,p}(\Omega,\mathbb{R}^N)$$
and, in the following, we will simply refer to them as "minimizers".\\
In \cite{Schmidt}, T. Schmidt proved that if $M=0$, $u$ is $C^{1,\alpha}$ in an open dense subset of $\Omega$.
We will prove the following result:
\begin{thm} \label{maintheorem}
	Let $f$ satisfy hypotheses $(A.1)$,$(A.2)$ and $(A.3)$ and let $u$ be a local minimizer of the corresponding functional $\mathcal{F}$. Let $z_0\in \mathbb{R}^{nN}$ such that $|z_0|>M+1$ and assume there is a $x_0 \in \mathbb{R}^n$ with the property that $$\fint_{B_\rho(x_0)}|Du-z_0|^p\to 0\quad\text{as}\quad \rho\to 0^+,$$
	then $x_0 \in \Reg(u)$, where $\Reg(u)=\{x \in \Omega: \, u \text{ is Lipschitz in a neighbourhood of } x\}$.\\
	Moreover, $\Reg(u)$ is a dense open subset of $\Omega$
\end{thm}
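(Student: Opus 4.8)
Since $|z_0|>M+1$, every $z$ with $|z-z_0|<1$ satisfies $|z|>M$, so on the ball $\{z:|z-z_0|<1\}$ the Lagrangian $f$ coincides with the strictly $W^{1,p}$-quasiconvex function $g$ and is, in particular, of class $C^2$ there. Put $\mathbb{A}:=D^2f(z_0)=D^2g(z_0)$: testing the strict quasiconvexity of $g$ at $z_0$ with $t\varphi$ and letting $t\to0^+$ (the linear term drops since $\fint_{B_1}D\varphi=0$, and the right-hand side tends to $\gamma\fint_{B_1}|D\varphi|^2$ by dominated convergence) gives the Gårding--Legendre--Hadamard inequality $\fint_{B_1}\mathbb{A}(D\varphi,D\varphi)\ge 2\gamma\fint_{B_1}|D\varphi|^2$ for all $\varphi\in C_0^\infty(B_1,\mathbb{R}^N)$, while $|\mathbb{A}|<\infty$ by continuity of $D^2g$. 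With $V(z):=(1+|z|^2)^{(p-2)/4}z$ and $E(x_0,\rho):=\fint_{B_\rho(x_0)}|V(Du)-V((Du)_{x_0,\rho})|^2\,dx$, the hypothesis $\fint_{B_\rho(x_0)}|Du-z_0|^p\to0$ forces $(Du)_{x_0,\rho}\to z_0$ and $E(x_0,\rho)\to0$ as $\rho\to0^+$. The plan is to prove an excess-decay estimate for $E$, iterate it into a Morrey--Campanato bound yielding $Du\in C^{0,\beta}_{\mathrm{loc}}$ near $x_0$ (hence $u$ Lipschitz there), and then to deduce the density of $\Reg(u)$ by a soft dichotomy.

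\textbf{Caccioppoli inequality and linearisation.}
First I would establish a reverse-Poincaré (Caccioppoli) inequality: there are $\rho_0,\varepsilon_1,c>0$ such that if $B_{2\rho}(x_0)\subset\Omega$, $\rho<\rho_0$ and $E(x_0,2\rho)+|(Du)_{x_0,2\rho}-z_0|^p<\varepsilon_1$, then, with $\ell$ the affine map satisfying $D\ell=(Du)_{x_0,2\rho}$, $\ell(x_0)=(u)_{x_0,2\rho}$ and $w:=u-\ell$,
\[
\fint_{B_\rho(x_0)}\bigl|V(Du)-V((Du)_{x_0,2\rho})\bigr|^2\,dx\;\le\;c\fint_{B_{2\rho}(x_0)}\Bigl|V\Bigl(\tfrac{w}{\rho}\Bigr)\Bigr|^2\,dx+c\,\omega(2\rho),
\]
where the remainder $\omega$ collects the contributions of the $q$-growth and of the ``bad'' set $\{|Du-z_0|\ge\tfrac12\}$ — on which $f$ need not equal $g$, but whose relative measure is $\le c\,\varepsilon_1$ by Chebyshev. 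The derivation uses the minimality of $u$ (comparing it with maps obtained by cutting off $w$), the strict $W^{1,p}$-quasiconvexity of $g$ at $(Du)_{x_0,2\rho}$ (which lies in the region $f\equiv g$), a second-order Taylor expansion of $g$ on the good set $\{|Du-z_0|<\tfrac12\}$, and $|f|\le\Gamma(1+|z|)^q$ on the bad set. Next, from minimality and the strict quasiconvexity one checks that $u$ is approximately $\mathbb{A}$-harmonic on $B_\rho(x_0)$ in the integral sense needed for the $\mathbb{A}$-harmonic approximation lemma; combining it with the Caccioppoli inequality and the decay estimate $\fint_{B_\theta}|Dh-(Dh)_\theta|^2\le c\,\theta^2\fint_{B_1}|Dh-(Dh)_1|^2$ for solutions of $\operatorname{div}(\mathbb{A}\,Dh)=0$ yields, for each $\theta\in(0,\tfrac14)$ and suitable $\rho_1(\theta),\varepsilon_2(\theta)>0$,
\[
E(x_0,\theta\rho)\le c_*\,\theta^2\bigl(E(x_0,\rho)+|(Du)_{x_0,\rho}-z_0|^p\bigr)+c\,\omega(\rho)
\]
whenever $\rho<\rho_1(\theta)$ and $E(x_0,\rho)+|(Du)_{x_0,\rho}-z_0|^p<\varepsilon_2(\theta)$. (Equivalently one argues by blow-up: rescaling $v_j(y):=\lambda_j^{-1}\rho_j^{-1}\bigl(u(x_0+\rho_j y)-\ell_j(x_0+\rho_j y)\bigr)$ with $\lambda_j^2:=E(x_0,\rho_j)$, one extracts a weak $W^{1,p}$-limit $v$, shows $\operatorname{div}(\mathbb{A}\,Dv)=0$ because all the nonlinearity is frozen at $z_0$ in the limit, and concludes from the decay of $v$, contradicting a failure of the above estimate.)

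\textbf{The main obstacle: the bad-set error and the gap condition.}
The crux is to show that $\omega(\rho)$ is genuinely of lower order, say $\omega(\rho)\le c\,\rho^{2\sigma}$ for some $\sigma>0$, so that the decay estimate is iterable. This relies on two facts, both of which consume precisely the room quantified by $1<p\le q<p+\tfrac{\min\{2,p\}}{2n}$: (i) a higher-integrability property of local $W^{1,p}$-minimizers of $\mathcal{F}$ — absence of a Lavrentiev gap and $Du\in W^{1,q}_{\mathrm{loc}}\cap L^{q+\delta}_{\mathrm{loc}}$ — which makes the use of the Euler--Lagrange system legitimate and, crucially, lets one bound $\int_{\{|Du-z_0|\ge1/2\}}(1+|Du|)^q$ by a small power of the (small) measure of that set times a controlled norm; and (ii) the reverse-Hölder/Gehring iteration behind (i), which closes in the $W^{1,p}$-quasiconvex $(p,q)$ framework — separately in the sub-quadratic and super-quadratic ranges, whence the factor $\min\{2,p\}$ — exactly under this gap, following Schmidt \cite{Schmidt}. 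In the blow-up picture the same difficulty reappears as the \emph{strong} $W^{1,p}$-compactness of the rescaled sequence $v_j$ despite the shrinking bad set $\{|Dv_j|\gtrsim\lambda_j^{-1}\}$. I expect this book-keeping to be the hardest part of the argument; the remaining steps are, modulo care, standard perturbation theory.

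\textbf{Iteration, Lipschitz continuity, openness and density.}
Granted $\omega(\rho)\le c\,\rho^{2\sigma}$, fix $\theta$ so small that $c_*\theta^2<\theta^{2\beta}$ for some $\beta\in(0,\min\{1,\sigma\})$; iterating the decay estimate — carrying along a bound on $|(Du)_{x_0,\theta^k\rho}-z_0|$, which stays small because $\sum_k E(x_0,\theta^k\rho)^{1/2}$ converges — yields $E(x_0,r)\le c\,r^{2\beta}$ for all small $r$. Since the two smallness conditions are open, the same bound holds with uniform constants for all centres in a neighbourhood $U$ of $x_0$; by the Campanato characterisation $V(Du)\in C^{0,\beta}(U,\mathbb{R}^{nN})$, hence (as $V^{-1}$ is locally Lipschitz) $Du\in C^{0,\beta}_{\mathrm{loc}}(U)$, $u\in C^{1,\beta}_{\mathrm{loc}}(U)$, and in particular $u$ is Lipschitz near $x_0$, so $x_0\in\Reg(u)$. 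Openness of $\Reg(u)$ is immediate from its definition. For density, let $B=B_r(y)\subset\Omega$ and suppose $B\cap\Reg(u)=\varnothing$. If $|Du|\le M+1$ a.e.\ on $B$, then $u\in W^{1,\infty}(B)$, so $u$ is Lipschitz on $B$ and $B\subseteq\Reg(u)$ — a contradiction. Otherwise $\bigl|\{x\in B:|Du(x)|>M+1\}\bigr|>0$, so this set contains a Lebesgue point $x_0$ of $Du$; with $z_0:=Du(x_0)$ one has $|z_0|>M+1$ and $\fint_{B_\rho(x_0)}|Du-z_0|^p\to0$, whence $x_0\in\Reg(u)$ by the first part — again a contradiction. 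Thus no ball in $\Omega$ avoids $\Reg(u)$, i.e.\ $\Reg(u)$ is dense in $\Omega$.
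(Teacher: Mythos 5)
Your overall architecture — freezing $\mathcal{A}=D^2f$ at a gradient value of modulus larger than $M$, extracting the Legendre--Hadamard condition from the asymptotic quasiconvexity, a Caccioppoli inequality plus $\mathcal{A}$-harmonic approximation, an iterated excess-decay estimate, Campanato embedding, and a final Lebesgue-point dichotomy for openness and density — is the same as the paper's, and your density argument is essentially identical to the one given there (if anything stated more carefully). The genuine gap is exactly at the step you yourself flag as the crux: the treatment of the $(p,q)$-gap inside the Caccioppoli inequality. You write the reverse-Poincar\'e estimate with a remainder $\omega(2\rho)$ and then need $\omega(\rho)\le c\rho^{2\sigma}$, proposing to obtain this from higher integrability of the minimizer ($Du\in W^{1,q}_{\mathrm{loc}}\cap L^{q+\delta}_{\mathrm{loc}}$, absence of a Lavrentiev gap, a Gehring iteration). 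None of this is available under the stated hypotheses: a priori $u$ is only a $W^{1,p}$-minimizer of a functional with $q$-growth and merely \emph{asymptotic} quasiconvexity, and no higher integrability (not even $Du\in L^{q}_{\mathrm{loc}}$) is proved in the paper or known in this generality; the whole point of the gap condition $q<p+\frac{\min\{2,p\}}{2n}<\frac{np}{n-1}$ is to make such a detour unnecessary. Concretely, your comparison maps ``obtained by cutting off $w$'' are only $W^{1,p}$, so the $q$-growth of $f$ on the annulus where the cut-off acts produces terms of the type $\int |Du|^{q}$ that the $L^p$-bound cannot control, and Chebyshev on the bad set does not help without $q$-integrability. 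As written, the central estimate is an unproven (and doubtful) claim, so the argument does not close.

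The paper closes precisely this hole by a different mechanism: the competitor is built with the Fonseca--Mal\'y smoothing operator $T_{\tilde r,\tilde s}$ of Lemma \ref{Fonseca1} together with the good-radii selection of Lemma \ref{Fonseca2}, whose properties (6)--(9) give $L^{q}$-control of $D\psi$ purely in terms of $L^{p}$-quantities because $q<\frac{np}{n-1}$; the terms involving $Du$ itself are handled via the growth estimate $|Df(z)|\le \Gamma_2(1+|z|^{q-1})$, Lemma \ref{Acerbo} and H\"older with $q<p+1$. The resulting Caccioppoli inequality \eqref{caccioppoli} carries the error term $(\Phi_p)^{\frac{q}{p}}$ — a superlinear power of the \emph{excess}, not a power of the radius — and in the excess-decay lemma this term is absorbed simply by choosing $\varepsilon_0$ small, see \eqref{settepuntoventitre}; no rate in $\rho$, no Gehring-type improvement and no removal of a Lavrentiev gap are ever needed. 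For the same reason your concern about the Euler--Lagrange system is unnecessary: since $q\le p+1$ one has $|Df(Du)|\lesssim 1+|Du|^{q-1}\in L^{1}_{\mathrm{loc}}$, so the equation is legitimate without higher integrability, as used in Lemma \ref{dieci}. Replacing your ``higher integrability'' step by this truncation/smoothing construction, and restating the Caccioppoli error as $(\Phi_p)^{\frac{q}{p}}$ to be absorbed in the iteration (as in Lemma \ref{caccioppolilemma} and the subsequent excess-decay lemma), would turn your outline into a complete proof.
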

\section{Technical lemmas and definitions}
\begin{lem} \label{lemmaasintotico}
If $f$ is locally bounded from below, then the function $g$ in $(A.3)$ can be chosen such that $g\le f$.\\
Moreover, in this case, assuming $(A.3)$ is equivalent to assuming the existence of a positive constant $M>0$ big enough such that the following holds: 
\begin{multline}\tag{A.3'}\label{treppiufforte}
\fint_{B_1} f(z+D\varphi)\ge f(z)+\gamma\fint_{B_1}(1+|D\varphi|^2)^{\frac{p}{2}-1}|D\varphi|^{2}\,\, ,\\  \quad \forall z:\, |z|>M,\, \forall \varphi \in C_0^\infty(\Omega,\mathbb{R}^N). \quad
\end{multline}
\end{lem}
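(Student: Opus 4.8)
The plan is to split the statement into two parts. First, I would establish that $g$ can be chosen with $g\le f$. Since $f$ is locally bounded from below and $g=f$ outside $B_M(0)$, the only issue is on the compact ball $\overline{B_M(0)}$. On this set $f$ is bounded below by some constant $-C_0$; I would then replace the original $g$ by $\tilde g := \min\{g, f\}$ outside a slightly larger region, or more carefully, modify $g$ only inside $B_M$ so that it stays below $f$ there while preserving strict $W^{1,p}$-quasiconvexity. Concretely, I expect one enlarges $M$ slightly (this is why the statement says "$M>0$ big enough"): choose $M'>M$, keep $g=f$ for $|z|>M'$, and on $|z|\le M'$ take $g$ to be a suitably regularized version of a function that is both $\le f$ and convex enough (e.g. a large multiple of $(1+|z|^2)^{p/2}$ truncated to match $f$ near $|z|=M'$). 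The point is that $W^{1,p}$-quasiconvexity is only tested against the average over $B_1$, so altering $g$ on a bounded set, as long as the added "convex correction" dominates, keeps the quasiconvexity inequality; I would verify the matching at $|z|=M'$ gives a continuous $g\in W^{1,p}$.

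Next, for the equivalence between (A.3) and (A.3'): the direction (A.3')$\Rightarrow$(A.3) is where the real content lies, so I treat (A.3)$\Rightarrow$(A.3') first, which should be essentially immediate. Given (A.3), with $g\le f$ and $g=f$ for $|z|>M$, fix $z$ with $|z|>M$ and $\varphi\in C_0^\infty$. Then
$$\fint_{B_1} f(z+D\varphi)\ge \fint_{B_1} g(z+D\varphi)\ge g(z)+\gamma\fint_{B_1}(1+|D\varphi|^2)^{\frac p2-1}|D\varphi|^2 = f(z)+\gamma\fint_{B_1}(1+|D\varphi|^2)^{\frac p2-1}|D\varphi|^2,$$
using $f\ge g$ in the first step, strict quasiconvexity of $g$ in the second, and $f(z)=g(z)$ for $|z|>M$ in the last. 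A scaling argument from $B_1$ to a general ball, and hence to $\varphi\in C_0^\infty(\Omega,\mathbb{R}^N)$, is standard.

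For the converse (A.3')$\Rightarrow$(A.3), the idea is to \emph{define} $g$ directly from $f$ by a suitable envelope: set $g$ equal to $f$ for $|z|>M$ and, for $|z|\le M$, let $g(z)$ be the "quasiconvex extension from the exterior", e.g.
$$g(z):=\inf\Big\{\fint_{B_1} f(z+D\varphi) - \gamma\fint_{B_1}(1+|D\varphi|^2)^{\frac p2-1}|D\varphi|^2 \ :\ \varphi\in C_0^\infty(B_1,\mathbb{R}^N)\Big\},$$
or a variant thereof that is guaranteed finite and $W^{1,p}$ by the $q$-growth bound (A.2) and local boundedness from below. One then checks: (i) $g\le f$ (take $\varphi=0$); (ii) $g=f$ for $|z|>M$, using (A.3') which says $f$ itself already satisfies the defining inequality in that range, so the infimum is attained at $\varphi=0$; (iii) $g$ inherits strict $W^{1,p}$-quasiconvexity by a concatenation/gluing argument on test functions together with the subadditivity built into the infimum, and the fixed defect term $\gamma(1+|D\varphi|^2)^{p/2-1}|D\varphi|^2$; (iv) $g$ is continuous and lies in $W^{1,p}$ via the growth control inherited from $f$.

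I expect the main obstacle to be step (iii): showing that the infimum-defined $g$ is itself strictly $W^{1,p}$-quasiconvex with the \emph{same} constant $\gamma$ (or a controlled fraction of it). This requires the standard but delicate "gluing of rescaled test functions" argument — given a competitor $\varphi$ for $g(z+D\psi)$ inside a subball and the near-optimal $\psi$ for $g(z)$, one must paste them into a single admissible test function for the left-hand average and control the cross terms, using the precise form of the quadratic-type defect $(1+|t|^2)^{p/2-1}|t|^2$ and its behavior under rescaling. The sub-quadratic versus super-quadratic dichotomy in $p$ will show up here in the elementary inequalities needed to absorb these cross terms; this is the only place where the specific structure of the defect (rather than a generic $|D\varphi|^p$) really matters, and getting the constants to close is the technical heart of the lemma.
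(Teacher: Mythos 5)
Your second step --- the chain $\fint f(z+D\varphi)\ge\fint g(z+D\varphi)\ge g(z)+\gamma\fint(1+|D\varphi|^2)^{\frac p2-1}|D\varphi|^2=f(z)+\dots$ for $|z|>M$ once $g\le f$ is available --- is exactly the paper's argument for (A.3)$\Rightarrow$(A.3'), so that part is fine. The genuine gap is in the first part, the construction of $g\le f$. Your proposed mechanisms (replacing $g$ by $\min\{g,f\}$, or pasting inside $B_{M'}$ a truncated convex function matched to $f$ at $|z|=M'$) do not work as described: quasiconvexity is a nonlocal condition, the minimum of two quasiconvex functions is in general not quasiconvex, and your heuristic that ``quasiconvexity is only tested against the average over $B_1$, so altering $g$ on a bounded set keeps the inequality'' is false --- the test configurations $z+D\varphi$ sweep all of $\mathbb{R}^{nN}$ no matter where $z$ sits, so a local modification enters infinitely many test inequalities. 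The idea you are missing (the paper's, following Carozza--Passarelli--Schmidt--Verde) is a \emph{global smooth subtraction}: set $K=\sup_{|z|\le M}(g-f)$, pick a smooth nonnegative compactly supported $h$ with $h\ge K$ on $\{|z|\le M\}$ and $|D^2h|\le\gamma$ (for $p\le 2$, $|D^2h(z)|\le\gamma(1+|z|^2)^{\frac p2-1}$, obtained by dilating $h$), and take $\tilde g=g-h$. Then $\tilde g\le f$ everywhere, $\tilde g=f$ for large $|z|$, and the second-order Taylor estimate $h(z+D\varphi)\le h(z)+Dh(z)D\varphi+\frac\gamma2|D\varphi|^2$, averaged over $B_1$, shows that subtracting $h$ costs at most $\frac\gamma2\fint|D\varphi|^2$, which the defect term $\gamma\fint(1+|D\varphi|^2)^{\frac p2-1}|D\varphi|^2$ absorbs. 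Without this (or an equivalent) device your first step does not close.

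Concerning the converse implication (A.3')$\Rightarrow$(A.3): your envelope-type definition of $g$ is a plausible route, but you yourself flag that its strict quasiconvexity (your step (iii)) is unproved, and there are also finiteness issues since the subtracted defect and the lower bound $-\Gamma(1+|z|)^q$ compete for large gradients when $q>p$. Note that the paper does not carry out this direction at all; it invokes the theory of quasiconvex envelopes via the cited result of Carozza et al.\ and only writes out the subtraction argument and the easy chain of inequalities, which are the parts actually used later (Corollary \ref{corollario}). So the decisive missing ingredient in your proposal is the $h$-subtraction trick, not the envelope discussion.
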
 
\begin{proof}
	The proof of the first part uses the theory of quasiconvex envelopes and it is identical to what is shown in (\cite{Carozza}, Thm 2.5 (ii)), but we will repeat it for the convenience of the reader.\\
	Let us start with the case $p>2$.\\
	Assume that $f(z)=g(z)$ for all $z: |z|>M$ and that $g$ satisfies strict quasiconvexity for a constant $\gamma>0$, i.e.:
	\begin{equation}\label{quasiconvexitydig}
	\fint_{B_1} g(z+D\varphi(x))\,dx \ge g(z)+\gamma\fint_{B_1}(1+|D\varphi(x)|^2)^{\frac{p}{2}-1}|D\varphi|^2
	\end{equation}\\
	Let $K=\sup\limits_{|z|\le M}(g-f)(z)$ and notice that $K<\infty$ since $f$ is locally bounded from below and $g$ is locally bounded.\\
	Introduce an auxiliary function $h$ which is smooth and non-negative on $\mathbb{R}^{nN}$, with compact support and such that $|D^2h|\le \gamma$ on $\mathbb{R}^{nN}$ and $h(z)\ge K$ for $|z|\le M$.\\
	We now claim that $\tilde{g}=g-h$ is uniformly strictly quasiconvex and satisfying $\tilde{g}\le f$ for all $z \in \mathbb{R}^{nN}$ and $\tilde{g}(z)=f(z)$ for large enough $z$. Of course $\tilde{g}(z)=f(z)$ outside the support of $h(z)$ and $\tilde{g}\le f$ is a trivial consequence of the fact that $h$ is always larger than the difference between $g$ and $f$.\\
	To prove $\tilde{g}$ is strictly quasiconvex, let us consider, for any $\varphi \in C_0^\infty(\Omega,\mathbb{R}^N)$, the quantity:
	\begin{multline*}h(z+D\varphi)-h(z)-Dh(z)D\varphi=\int_0^1 Dh(z+tD\varphi)D\varphi \,dt -Dh(z)D\varphi=\\=|D\varphi|^2\int_0^1\int_0^1 tD^2h(z+stD\varphi)\,ds\,dt\end{multline*}
	so
	$$h(z+D\varphi) \le h(z)+Dh(z)D\varphi+\frac{\gamma}{2}|D\varphi|^2$$
	By passing to the integral average over $B_1$ and changing all signs we obtain
	$$\fint_{B_1} -h(z+D\varphi) \ge -h(z)-\frac{\gamma}{2}\fint_{B_1}|D\varphi|^2.$$
	By summing this inequality with \eqref{quasiconvexitydig} we obtain the quasiconvexity of $\tilde{g}$.\\
	To prove the second statement in the lemma, assume now $g\le f$ and choose $z$ such that $|z|>M$. We have:
	\begin{multline*}\fint_{B_1} f(z+D\varphi)\ge \fint_{B_1} g(z+D\varphi) \ge g(z)+\gamma\fint_{B_1}(1+|D\varphi|^2)^{\frac{p}{2}-1}|D\varphi|^{2} =\\=f(z)+\gamma\fint_{B_1}(1+|D\varphi|^2)^{\frac{p}{2}-1}|D\varphi|^{2}.\end{multline*}
	Now, the only thing we need to change in the case $p\le 2$ is to take a smooth function $h$ such that $|D^2h(z)|\le \gamma (1+|z|^2)^{\frac{p}{2}-1}$. This can easily be done by considering $\tilde{h}(z)=h\left(\frac{z}{d}\right)$ (where $h$ is the same function used in the proof of the case $p>2$) for large enough $d$, i.e. for $d>(1+M^2)^{1-\frac{p}{2}}$
\end{proof}
\begin{cor} \label{corollario}
	Let $f$ satisfy $(A.1)$,$(A.2)$ and $(A.3)$. Then it satisfies $(A.3')$.
\end{cor}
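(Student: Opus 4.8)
The plan is to read this off directly from Lemma~\ref{lemmaasintotico}, chaining together its two assertions once the standing hypothesis of that lemma has been checked. The first step is to observe that $(A.1)$ already supplies what is needed: since $f\in C^2(\mathbb{R}^{nN},\mathbb{R})$, it is in particular continuous, hence bounded on every compact subset of $\mathbb{R}^{nN}$, so $f$ is locally bounded from below. Thus the standing hypothesis of Lemma~\ref{lemmaasintotico} is met, and $f$ of course satisfies $(A.3)$ by assumption.

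The second step is to apply the first assertion of the lemma: under $(A.3)$ together with this local lower bound, the comparison function $g$ may be replaced by $\tilde g=g-h$, with $h$ the auxiliary smooth, non-negative, compactly supported function built in the proof of the lemma, so that $\tilde g\le f$ on all of $\mathbb{R}^{nN}$, $\tilde g$ coincides with $f$ outside the compact support of $h$, and $\tilde g$ is still strictly $W^{1,p}$-quasiconvex with some constant $\gamma'>0$. Enlarging the threshold from $M$ to some $M'$ large enough that the ball $\{\,|z|\le M'\,\}$ contains the support of $h$, the triple $(\tilde g,\gamma',M')$ realizes $(A.3)$ with the extra property $\tilde g\le f$; in other words, we are now exactly "in the case" contemplated by the second sentence of the lemma.

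The third step is then to invoke that second assertion — the equivalence valid precisely when the quasiconvex comparison function lies below $f$ — which converts $(A.3)$ for $(\tilde g,\gamma',M')$ into the pointwise inequality $(A.3')$ for $f$ itself, with threshold $M'$ and constant $\gamma'$. This is exactly the statement of the corollary.

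I do not expect any genuine obstacle here: the whole content sits in Lemma~\ref{lemmaasintotico}, and what remains is bookkeeping. The only two points worth a word of care are that it is $(A.1)$ (continuity, hence local boundedness) that licenses the application of the lemma, and that the constant $\gamma$ and the threshold $M$ appearing in $(A.3')$ need not be those of the original $(A.3)$; since $(A.3')$ merely asserts the existence of such constants, this is immaterial.
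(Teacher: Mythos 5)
Your argument is correct and is exactly the route the paper intends: the corollary is read off from Lemma~\ref{lemmaasintotico} by noting that $(A.1)$ gives continuity, hence local boundedness from below, and then chaining the lemma's two assertions (replace $g$ by $\tilde g\le f$ coinciding with $f$ outside a compact set, then transfer the quasiconvexity inequality to $f$ for $|z|$ large). Your extra care about enlarging the threshold $M$ and allowing the constant $\gamma$ to change is sound bookkeeping that the paper leaves implicit, and it does not alter the statement since $(A.3')$ only asserts existence of such constants.
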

\begin{mydef}[Excess]
Let $\beta>0$ and let us consider 
$$V^\beta(z)=(1+|z|^2)^{\frac{\beta-1}{2}}z \text{ and } W^{\beta}(z)=(1+|z|)^{\beta-1}z$$
with $V$ and $W$ comparable in the sense that there exists a constant $c>0$ depending only on $\beta$ such that for all $z$ we have \begin{equation} \label{comparability} c^{-1}|W^{\beta}(z)|\le |V^{\beta}(z)|\le c|W^{\beta}(z)|.\end{equation}
We will often consider the quantity $\left|V^{\frac{p}{2}}(z)\right|^2$ in our computations. The advantage of sometimes dealing with the equivalent quantity $|W^{\frac{p}{2}}(z)|^2$ is the fact that it can be easily proven that $$z\mapsto |W^{\frac{p}{2}}(z)|^2$$ is convex for all $p\ge 1$.\\
For $u \in W^{1,p}(B_\rho(x_0),\mathbb{R}^N)$ and $z \in \mathbb{R}^{nN}$ define the quantity
$$\Phi_p(u,x_0,\rho,z):=\fint_{B_\rho(z_0)}|V^{\frac{p}{2}}(Du-z)|^2 \text{ and }$$
and in particular, we call
$$\Phi_p(u,x_0,\rho):=\Phi_p\left(u,x_0,\rho,\fint_{B_\rho(x_0)}Du\right).$$
the \textbf{excess function} of the minimizer $u$ in $x_0$
\end{mydef}
In \cite{Schmidt}, T. Schmidt proved that if $u$ is a $W^{1,p}$-minimizer of $\mathcal{F}$ on $B_\rho(x_0)$, for all $L>0$ and $\alpha\in (0,1)$ there exist an $\varepsilon_0>0$ such that if \begin{equation}\label{quella}\Phi_p(u,x_0,\rho)\le \varepsilon_0 \text{ and } \left|\fint_{B_\rho(x_0)} Du\right|\le L\end{equation} then $u \in C^{1,\alpha}_{loc}(B_\rho(x_0);\mathbb{R}^N)$.\\
We will now replicate his reasoning in the weaker hypothesis of asymptotic quasiconvexity.\\
To do so it will be first necessary to observe the following.
\begin{lem} \label{puntiregolari}
	If there exists $z_0$, $|z_0|>M+1$ and $x_0$ such that:$$\fint_{B_\rho(x_0)} \left|V^{\frac{p}{2}}\left(Du-z_0\right)\right|^2 \to 0 \text{ as } \rho \to 0^+$$
	then there exists $r_1=r_1(x_0,z_0)$such that for all $r<r_1$
	$$\left|\fint_{B_r(x_0)} Du\right|>M+1.$$
\end{lem}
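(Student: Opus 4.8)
The plan is to show that the hypothesis forces the averages $\fint_{B_r(x_0)}Du$ to converge to $z_0$, and then to conclude from continuity of the Euclidean norm together with the \emph{strict} inequality $|z_0|>M+1$. The whole argument is elementary; no minimality of $u$ is used.

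First I would upgrade the hypothesis $\Phi_p(u,x_0,\rho,z_0)=\fint_{B_\rho(x_0)}|V^{\frac{p}{2}}(Du-z_0)|^2\to0$ to the statement $\fint_{B_\rho(x_0)}|Du-z_0|\to0$ as $\rho\to0^+$. To do this I split $B_\rho(x_0)$ along the level $|Du-z_0|=1$ and estimate the integrand $|V^{\frac{p}{2}}(Du-z_0)|^2=(1+|Du-z_0|^2)^{\frac{p}{2}-1}|Du-z_0|^2$ from below on each piece. On the set where $|Du-z_0|\le1$ one has $(1+|Du-z_0|^2)^{\frac{p}{2}-1}\ge 2^{\frac p2-1}$ (in the sub-quadratic range the exponent is negative but the base is $\le2$), so there $|V^{\frac{p}{2}}(Du-z_0)|^2\ge 2^{\frac p2-1}|Du-z_0|^2$, and a Cauchy--Schwarz step with respect to the normalised measure gives $\fint_{B_\rho}|Du-z_0|\,\mathbf 1_{\{|Du-z_0|\le1\}}\le\big(2^{1-\frac p2}\,\Phi_p(u,x_0,\rho,z_0)\big)^{1/2}$. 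On the complementary set $1+|Du-z_0|^2<2|Du-z_0|^2$, so $|V^{\frac{p}{2}}(Du-z_0)|^2\ge 2^{\frac p2-1}|Du-z_0|^p\ge 2^{\frac p2-1}|Du-z_0|$, the last inequality because $p>1$ and $|Du-z_0|\ge1$ there; hence $\fint_{B_\rho}|Du-z_0|\,\mathbf 1_{\{|Du-z_0|>1\}}\le 2^{1-\frac p2}\,\Phi_p(u,x_0,\rho,z_0)$. Adding the two bounds, $\fint_{B_\rho(x_0)}|Du-z_0|\le (2^{1-\frac p2}\Phi_p)^{1/2}+2^{1-\frac p2}\Phi_p\to0$. (In the super-quadratic case $p\ge2$ one may skip the splitting altogether, since then $|V^{\frac p2}(z)|^2\ge\max\{|z|^2,|z|^p\}$ directly.)

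Next, by Jensen's inequality applied to the vector-valued average,
$$\Big|\fint_{B_r(x_0)}Du-z_0\Big|=\Big|\fint_{B_r(x_0)}(Du-z_0)\Big|\le\fint_{B_r(x_0)}|Du-z_0|\;\xrightarrow[r\to0^+]{}\;0,$$
so $\fint_{B_r(x_0)}Du\to z_0$. Set $\delta:=|z_0|-(M+1)>0$ and choose $r_1=r_1(x_0,z_0)>0$ so small that $\fint_{B_r(x_0)}|Du-z_0|<\delta$ for every $r<r_1$; then for such $r$ the reverse triangle inequality yields
$$\Big|\fint_{B_r(x_0)}Du\Big|\ge|z_0|-\fint_{B_r(x_0)}|Du-z_0|>|z_0|-\delta=M+1,$$
which is the assertion.

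There is essentially no structural obstacle here; the only point calling for a little care is the sub-quadratic regime $1<p<2$, in which $|V^{\frac p2}(z)|^2$ behaves like $|z|^p$, not like $|z|^2$, for large $|z|$, so one cannot bound $\fint|Du-z_0|$ by a single power of $\Phi_p$ uniformly. The level-set splitting above is exactly what handles this: the bounded part is controlled quadratically and the unbounded part is controlled by $|z|\le|z|^p\le|V^{\frac p2}(z)|^2$, valid precisely because $p>1$. The comparability \eqref{comparability} between $V^{\frac p2}$ and $W^{\frac p2}$ is not needed for this lemma, though one could equivalently phrase the estimates through $|W^{\frac p2}(z)|^2=(1+|z|)^{p-2}|z|^2$.
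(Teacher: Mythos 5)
Your proof is correct, and it reaches the conclusion by the same overall scheme as the paper (small excess at $x_0$ forces the averages of $Du$ to approach $z_0$, and the strict inequality $|z_0|>M+1$ then survives for small radii); the difference lies in the device used at the key step. The paper passes from $\fint_{B_r}|V^{\frac p2}(Du-z_0)|^2\to0$ to $\bigl|\fint_{B_r}(Du-z_0)\bigr|\le\varepsilon/2$ in one stroke, by invoking the comparability \eqref{comparability}, the convexity of $z\mapsto|W^{\frac p2}(z)|^2$, and Jensen's inequality for that convex function (plus monotonicity of $t\mapsto(1+t)^{p-2}t^2$), whereas you instead prove the quantitative estimate $\fint_{B_\rho}|Du-z_0|\le c\bigl(\Phi_p^{1/2}+\Phi_p\bigr)$ by splitting along the level $|Du-z_0|=1$, using the pointwise lower bounds $|V^{\frac p2}(z)|^2\gtrsim|z|^2$ on the bounded part and $|V^{\frac p2}(z)|^2\gtrsim|z|^p\ge|z|$ on the unbounded part, and then conclude with the plain triangle inequality. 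Your route avoids the convexity claim and the $V$--$W$ comparability entirely and yields an explicit modulus in terms of $\Phi_p$, at the cost of a slightly longer computation; the paper's route is shorter but leans on the stated (unproved in the text) convexity of $|W^{\frac p2}|^2$. One small presentational point: the displayed inequality $(1+|Du-z_0|^2)^{\frac p2-1}\ge 2^{\frac p2-1}$ on the set $\{|Du-z_0|\le1\}$ is valid only for $p\le2$ (for $p>2$ the correct trivial bound is $\ge1$); since you treat $p\ge2$ separately via $|V^{\frac p2}(z)|^2\ge\max\{|z|^2,|z|^p\}$, this does not affect the argument, but it would be cleaner to state explicitly that the splitting is performed only in the sub-quadratic range.
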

\begin{proof}
	Let $|z_0|=M+1+\varepsilon$. Then by definition of limit there must be a $r_1$ (of course this depends on the specific values of $x_0$ and $z_0$) such that for all $r<r_1$ we have:
	$$\fint_{B_r(x_0)} \left|W^{\frac{p}{2}}(Du-z_0)\right|^2 \le \left|W^{\frac{p}{2}}\left(\frac{\varepsilon}{2}\right)\right|^2, \quad \forall r<r_1$$
	where we have also used inequality \eqref{comparability}, comparing $V$ and $W$.\\
	which, by Jensen inequality means:
	$$\left|\fint_{B_r(x_0)} Du-z_0\right|\le \frac{\varepsilon}{2},\quad \forall r<r_1$$
	which gives:
	$$\left|\fint_{B_r(x_0)} Du\right|\ge |z_0|-\frac{\varepsilon}{2}=M+1+\varepsilon-\frac{\varepsilon}{2}>M+1, \quad \forall r<r_1.$$
\end{proof}
We start with a lemma by P. Marcellini (Step 2 of Thm 2.1 in \cite{Marcellini}).
\begin{lem}
Let $f$ satisfy assumptions $(A.1)$, $(A.2)$ and $(A.3)$ and $z$ be such that $|z|>M$. Then $|Df(z)|\le \Gamma_2(1+|z|^{q-1})$.
\begin{proof}
Let $z$ be such that $|z|>M$.
As shown in (\cite{Giusti}, proposition 5.2), quasiconvexity in a given $z$ implies rank-one convexity in $z$. This implies that the modulus of each partial derivative is bounded by the modulus of the difference quotient, which can be bounded by the $(q-1)$-th power of the argument by the inequality in hypothesis.\\
In symbols, if $\varphi_i(\cdot)=f(\zeta_1,\ldots,\zeta_{i-1},\cdot,\zeta_{i+1},\ldots,\zeta_{nN})$:
$$|\varphi_i'(\zeta_i)|\le \left|\frac{\varphi(\zeta_i+|\zeta|+1)-\varphi(\zeta_i)}{|\zeta|+1}\right|\le \Gamma_2(1+|\zeta|^{q-1})$$
\end{proof}
\end{lem}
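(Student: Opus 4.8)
The plan is to reduce the gradient bound to a one–dimensional convexity estimate, carried out on the function $g$ of $(A.3)$ --- which is quasiconvex on all of $\mathbb{R}^{nN}$ --- rather than on $f$, which we only control outside a ball. First I would note that, by $(A.3)$, $f$ and $g$ agree on the \emph{open} set $\{z:\,|z|>M\}$, so $Df(z)=Dg(z)$ for every such $z$ and it is enough to bound $|Dg(z)|$. Next I would promote the $q$--growth hypothesis from $f$ to $g$ on the whole space: since $g$ is continuous it is bounded on the compact ball $\{|z|\le M\}$, and on its complement $g=f$ satisfies $(A.2)$, so there is $\Gamma'=\Gamma'(n,N,M,\Gamma,g)$ with $|g(z)|\le\Gamma'(1+|z|)^q$ for all $z\in\mathbb{R}^{nN}$.

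The core step uses that, because the remainder term in the $W^{1,p}$--quasiconvexity inequality of $(A.3)$ is nonnegative, $g$ is quasiconvex in the usual sense, hence rank--one convex on $\mathbb{R}^{nN}$ by (\cite{Giusti}, Proposition 5.2); in particular, writing $z=(z_1,\dots,z_{nN})$, for each index $k$ the scalar function $\varphi_k(t):=g(z_1,\dots,z_{k-1},t,z_{k+1},\dots,z_{nN})$ is convex, and it is differentiable at $t=z_k$ because near that point $g$ coincides with the $C^2$ function $f$. Monotonicity of the difference quotients of a convex function then gives, for every $h>0$,
$$\frac{\varphi_k(z_k)-\varphi_k(z_k-h)}{h}\ \le\ \varphi_k'(z_k)\ \le\ \frac{\varphi_k(z_k+h)-\varphi_k(z_k)}{h}.$$
Taking $h=1+|z|$, the point $z$ and the two points obtained from it by replacing $z_k$ with $z_k\pm(1+|z|)$ all have norm at most $2(1+|z|)$, so the growth bound for $g$ yields $|\varphi_k(\cdot)|\le\Gamma'\big(3(1+|z|)\big)^q$ at each of them; hence $|\partial_k g(z)|=|\varphi_k'(z_k)|\le c\,(1+|z|)^q/(1+|z|)=c\,(1+|z|)^{q-1}$. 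Summing the squares over the $nN$ entries and using the elementary bound $(1+|z|)^{q-1}\le 2^{q-1}(1+|z|^{q-1})$, I would obtain $|Df(z)|=|Dg(z)|\le\Gamma_2(1+|z|^{q-1})$ for all $|z|>M$, with $\Gamma_2$ depending only on $n$, $N$, $q$, $\Gamma$, $M$ and $g$.

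The one delicate point --- and the reason the argument must be routed through $g$ --- is that the convexity estimate cannot be run on $f$ directly: a coordinate segment through a point $z$ with $|z|>M$ typically crosses the ball $\{|w|\le M\}$ (indeed, for a suitable index $k$, replacing $z_k$ by $z_k+(1+|z|)$ can produce a matrix of norm as small as $1$), and on that ball $f$ need not be rank--one convex. Passing to the globally quasiconvex $g$ of $(A.3)$, together with the trivial extension of the growth estimate to $g$, is exactly what makes the one--dimensional slope inequality available along the entire line; everything else is the bookkeeping of constants absorbed into $\Gamma_2$.
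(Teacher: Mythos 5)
Your proof is correct, and it is worth noting that it is not quite the argument the paper gives: the paper runs the difference-quotient estimate directly on $f$, invoking ``quasiconvexity in $z$ implies rank-one convexity in $z$'' and then comparing $\varphi_i'(\zeta_i)$ with the increment of size $1+|\zeta|$, whereas you route everything through the function $g$ of $(A.3)$. Your detour is not cosmetic. As you observe, the convexity-of-the-slice inequality $\varphi_k'(z_k)\le\bigl(\varphi_k(z_k+h)-\varphi_k(z_k)\bigr)/h$ needs convexity of $\varphi_k$ along the whole segment of length $h=1+|z|$, and such a segment starting at a point with $|z|>M$ can re-enter the ball $\{|w|\le M\}$ where $f$ is not known to be rank-one convex; the paper's version, read literally for $f$, glosses over this (and also writes a single two-sided difference quotient with absolute values where one really needs the two one-sided quotients, as you use). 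Passing to $g$, which is quasiconvex on all of $\mathbb{R}^{nN}$ because the extra term in $(A.3)$ is nonnegative, hence rank-one convex everywhere, and extending the $q$-growth from $f$ to $g$ by continuity of $g$ on $\{|w|\le M\}$, repairs exactly this point; since $f=g$ on the open set $\{|w|>M\}$, the bound transfers to $Df(z)$ as you say. The only price is that your $\Gamma_2$ picks up a dependence on $\sup_{|w|\le M}|g|$, i.e.\ on the choice of $g$ in $(A.3)$, which is harmless for the way the estimate is used later (Lemma \ref{Acerbo} only needs some constant $\Gamma_2$). In effect you have reconstructed Marcellini's original argument in the setting where it genuinely applies (a globally quasiconvex integrand with $q$-growth), which is what the paper's citation implicitly relies on.
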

We proceed adapting a result from Acerbi and Fusco (\cite{Acerbi}, Lemma II.3)
\begin{lem} \label{Acerbo}
	Choose any $L>0$. Let $f$ satisfy $(A.1)$, $(A.2)$ and be such that $|Df(z)|\le \Gamma_2(1+|z|^{q-1})$ holds true for all $z:\, |z|>M$, then 
	\begin{itemize}
		\item $|f(z+\eta)-f(z)-Df(z)\eta|\le c_1|V^{q/2}(\eta)|^2$
		\item $|Df(z+\eta)-Df(z)|\le c_2|V^{q-1}(\eta)|$
	\end{itemize}
	for all $\eta \in \mathbb{R}^{nN}$ and for all $z$ such that $M<|z|\le L$, with $c_1$ and $c_2$ depending only on $f$,$n$,$N$,$L$,$\Gamma$,$\Gamma_2$ and $M$.
	\begin{proof}
	Choose $L>0$ and $z$ such that $|z|\le L$ for some $L\in\mathbb{R}$. We start proving the first of the two inequalities. If $|\eta|\le 1$ we have:
	$$|f(z+\eta)-f(z)-Df(z)\eta| \le |D^2f(z+\theta\eta)||\eta|^2\le \max\limits_{|z|\le L+1}|D^2f(z)| |\eta|^2$$.\\
	If $\eta >1$ we start from:
	$$|f(z+\eta)-f(z)-Df(z)\eta|=|Df(z+\theta\eta)\eta-Df(z)\eta|$$
	and then, if $|z+\theta\eta|\le M$, we end with
	$$|Df(z+\theta\eta)\eta-Df(z)\eta|\le 2 \max\limits_{|z|\le \max\{L,M\}} |Df(z)||\eta|$$ otherwise, if $|z+\theta\eta|>M$, we conclude with:
	$$|Df(z+\theta\eta)\eta-Df(z)\eta| \le \Gamma_2(1+|z+\theta\eta|^{q-1})|\eta|+\max\limits_{|z|\le L} |Df(z)||\eta|\le c_1|\eta|^q.$$
	Similar methods are used in the proof of the second inequality.\\
	If $|\eta| \le 1$ we have:
	$$|Df(z+\eta)-Df(z)|=|D^2f(z+\theta\eta)||\eta|\le\max\limits_{|z|\le L+1}|D^2f(z)||\eta|.$$
	If $|\eta|> 1$ and $|z+\eta|\le M$ then:
	$$|Df(z+\eta)-Df(z)|\le 2 \max\limits_{|z|\le \max\{L,M\}} |Df(z)|\le c_2|\eta| $$
	while if $|\eta|>1$ and $|z+\eta|> M$ then:
	$$|Df(z+\eta)-Df(z)|\le \Gamma_2(1+|z+\eta|^{q-1})+\max\limits_{|z|\le L} |Df(z)|\le c_2|\eta|^{q-1}.$$
	
\end{proof}
\end{lem}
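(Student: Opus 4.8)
The plan is to fix $z$ with $M<|z|\le L$ and to split the argument according to whether $|\eta|\le 1$ or $|\eta|>1$. The motivation for this dichotomy is that, with constants depending only on $q$, one has $|V^{q/2}(\eta)|^2\sim|\eta|^2$ and $|V^{q-1}(\eta)|\sim|\eta|$ in the regime $|\eta|\le 1$, whereas $|V^{q/2}(\eta)|^2\sim|\eta|^q$ and $|V^{q-1}(\eta)|\sim|\eta|^{q-1}$ in the regime $|\eta|>1$ (these comparisons follow from $1\le 1+|\eta|^2\le 2$ in the first case and $|\eta|^2\le 1+|\eta|^2\le 2|\eta|^2$ in the second, irrespective of the sign of $q/2-1$). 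So in each regime it suffices to bound the two left-hand sides by the appropriate power of $|\eta|$.

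In the regime $|\eta|\le 1$ everything takes place inside the closed ball $\overline{B}_{L+1}$, on which $|D^2 f|$ is bounded by a constant depending only on $f,n,N,L$ by $(A.1)$. A second-order Taylor expansion of $f$ at $z$ then gives $|f(z+\eta)-f(z)-Df(z)\eta|\le c|\eta|^2\le c'|V^{q/2}(\eta)|^2$, and the mean value theorem applied to $Df$ gives $|Df(z+\eta)-Df(z)|\le c|\eta|\le c'|V^{q-1}(\eta)|$.

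In the regime $|\eta|>1$ I would first write, by the mean value theorem, $f(z+\eta)-f(z)-Df(z)\eta=(Df(z+\theta\eta)-Df(z))\eta$ for some $\theta\in(0,1)$, and then estimate $|Df(z+\theta\eta)|$ by cases: if $|z+\theta\eta|\le M$ it is controlled by $\max_{\overline{B}_M}|Df|$; if $|z+\theta\eta|>M$, the standing hypothesis $|Df(w)|\le\Gamma_2(1+|w|^{q-1})$ together with $|z+\theta\eta|\le L+|\eta|\le (L+1)|\eta|$ yields $|Df(z+\theta\eta)|\le c(\Gamma_2,L,q)|\eta|^{q-1}$. Combining either case with $|Df(z)|\le\max_{\overline{B}_L}|Df|$ and using $|\eta|>1$, one obtains $|Df(z+\theta\eta)-Df(z)|\le c|\eta|^{q-1}$, hence $|f(z+\eta)-f(z)-Df(z)\eta|\le c|\eta|^q\le c'|V^{q/2}(\eta)|^2$. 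Running the very same case distinction directly on $Df(z+\eta)-Df(z)$ (with $z+\eta$ in place of $z+\theta\eta$) gives $|Df(z+\eta)-Df(z)|\le c|\eta|^{q-1}\le c'|V^{q-1}(\eta)|$, which finishes the proof.

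I do not expect a genuine obstacle here; the points that need care are, first, ensuring that every constant depends only on $f,n,N,L,\Gamma,\Gamma_2,M$ — which is precisely what forces the separate treatment of the sub-case in which the intermediate point $z+\theta\eta$ (or $z+\eta$) lies in $\{|w|\le M\}$, where the $(q-1)$-growth bound on $Df$ is unavailable and must be replaced by the supremum of $|Df|$ over a fixed compact set — and, second, bookkeeping of the elementary inequalities relating $|\eta|,|\eta|^{q-1},|\eta|^q$ to $|V^{q/2}(\eta)|^2$ and $|V^{q-1}(\eta)|$ in the two regimes, whose constants depend on whether $q\ge 2$ or $q<2$ but in every case only on $q$.
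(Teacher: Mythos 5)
Your proposal is correct and follows essentially the same route as the paper's proof: the dichotomy $|\eta|\le 1$ versus $|\eta|>1$, the Taylor/mean-value bounds via $\max_{\overline B_{L+1}}|D^2f|$ for small $\eta$, and for large $\eta$ the further case distinction on whether the intermediate point lies in $\{|w|\le M\}$ (using $\max|Df|$ on a fixed compact set) or outside (using the $(q-1)$-growth of $Df$). Your explicit bookkeeping of the comparisons between $|\eta|,|\eta|^{q-1},|\eta|^q$ and $|V^{q/2}(\eta)|^2$, $|V^{q-1}(\eta)|$ is in fact slightly more careful than the paper, which leaves these elementary equivalences implicit.
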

\section{Caccioppoli estimate}
Next step is to obtain a Caccioppoli estimate adapting a proof by T. Schmidt (see \cite{Schmidt}, Lemma 7.3). 
To do so we need a few lemmas. The proofs can be found in \cite{Fonseca} and in \cite{Schmidt}.
\begin{lem}\label{Fonseca1}
	Let $0<r<s$ and $B_s\subset \Omega$. We define a bounded linear smoothing operator $$T_{r,s}:W^{1,1}(\Omega;\mathbb{R}^N)\to W^{1,1}(\Omega;\mathbb{R}^N)$$ for $u \in W^{1,1}(\Omega;\mathbb{R}^N)$ and $x \in \Omega$ by
	$$T_{r,s}u(x):=\fint_{B_1}u(x+\theta(x)y)\,dy\text{ where } \theta(x):=\frac{1}{2}\max\{\min\{|x|-r,s-|x|\},0\}.$$
	With this definition, for all $1\le p \le q < \frac{n}{n-1}p$ and all $u \in W^{1,p}(\Omega;\mathbb{R}^N)$ the following assertions are true:
	\begin{enumerate}
		\item $T_{r,s}u \in W^{1,p}(\Omega;\mathbb{R}^N),$
		\item $u=T_{r,s}u$ almost everywhere on $(\Omega\setminus B_s) \cup B_r,$
		\item $T_{r,s}u \in u +W_0^{1,p}(B_s\setminus \overline{B_r};\mathbb{R}^N),$
		\item $|D T_{r,s}u|\le c(n)T_{r,s}|D u|$ almost everywhere in $\Omega$,
		\item $\|T_{r,s}u\|_{L^p(B_s\setminus B_r)} \le c(n,p)\|u\|_{L^p(B_s\setminus B_r)},$
		\item $\|D T_{r,s} u\|_{L^p(B_s\setminus B_r)} \le c(n,p) \|Du\|_{L^p(B_s\setminus B_r)},$
		\item $\|T_{r,s}u\|_{L^q(B_s\setminus B_r)}\le c(n,p,q)(s-r)^{\frac{n}{q}-\frac{n-1}{p}}\left[\sup\limits_{t \in (r,s)} \frac{\tilde{\Xi}(t)-\tilde{\Xi}(r)}{t-r}+\sup\limits_{t \in (r,s)} \frac{\tilde{\Xi}(s)-\tilde{\Xi}(t)}{s-t}\right]^{\frac{1}{p}},$
		\item $\|DT_{r,s}u\|_{L^q(B_s\setminus B_r)}\le c(n,p,q)(s-r)^{\frac{n}{q}-\frac{n-1}{p}}\left[\sup\limits_{t \in (r,s)} \frac{\Xi(t)-\Xi(r)}{t-r}+\sup\limits_{t \in (r,s)} \frac{\Xi(s)-\Xi(t)}{s-t}\right]^{\frac{1}{p}}.$
		\item $\left| V^{\frac{p}{2}}(DT_{r,s}u)\right|^{\frac{2}{p}}\le c T_{r,s}\left[\left|V^{\frac{p}{2}}(Du)\right|^{\frac{2}{p}}\right] \ \forall p:  1\le p\le 2 \ \text{ a.e. in } \Omega, \ c=c(n,p).$
	\end{enumerate}
where we used the abbreviations: $$\tilde{\Xi}(t):=\|u\|^p_{L^p(B_t)}$$ and $$\Xi(t):=\|Du\|^p_{L^p(B_t)}$$.
\end{lem}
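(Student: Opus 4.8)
The plan is to reduce every assertion to elementary properties of the cutoff $\theta$: it is Lipschitz with $|D\theta|\le\tfrac12$ a.e., it vanishes on $B_r\cup(\Omega\setminus B_s)$ and is positive on $B_s\setminus\overline{B_r}$, and for $x$ with $|x|=\rho\in(r,s)$ the ball $B_{\theta(x)}(x)$ is contained in $B_s\setminus\overline{B_r}$; moreover, after the substitution $z=x+\theta(x)y$, one has $T_{r,s}v(x)=\fint_{B_{\theta(x)}(x)}v$ whenever $\theta(x)>0$. Assertion (2) is then immediate, since $T_{r,s}v(x)=v(x)$ wherever $\theta(x)=0$; and once (1) is established, (3) follows from (2), because $T_{r,s}u-u\in W^{1,p}(\Omega;\mathbb{R}^N)$ vanishes a.e.\ outside $B_s\setminus\overline{B_r}$ and hence lies in $W_0^{1,p}(B_s\setminus\overline{B_r};\mathbb{R}^N)$. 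For (1) and (4) I would first take $u$ smooth and differentiate under the average,
\begin{equation*}
D(T_{r,s}u)(x)=\fint_{B_1}Du\big(x+\theta(x)y\big)\,\big(\mathrm{Id}+y\otimes D\theta(x)\big)\,dy.
\end{equation*}
Since $|\mathrm{Id}+y\otimes D\theta(x)|\le 1+|y|\,|D\theta(x)|\le\tfrac32$ for $y\in B_1$, this gives $|D(T_{r,s}u)(x)|\le c(n)\fint_{B_1}|Du(x+\theta(x)y)|\,dy=c(n)\,T_{r,s}|Du|(x)$, which is (4); for general $u\in W^{1,p}$ one approximates by smooth maps in $W^{1,p}$ and passes to the limit using the $L^p$-bounds below, which yields (1).

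The $L^p$-estimates (5), (6) come from the change of variables $\Psi_y(x):=x+\theta(x)y$. By Jensen in the $B_1$-average and Fubini, $\int_{\Omega}|T_{r,s}v|^p\le\fint_{B_1}\big(\int_{\Omega}|v(\Psi_y(x))|^p\,dx\big)\,dy$; for each fixed $y\in B_1$ the map $\Psi_y$ is injective (since $|\Psi_y(x_1)-\Psi_y(x_2)|\ge\tfrac12|x_1-x_2|$) with Jacobian determinant $1+y\cdot D\theta(x)\ge\tfrac12$, so it is a bi-Lipschitz change of variables with distortion bounded independently of $y$; since in addition $\Psi_y$ maps $B_s\setminus\overline{B_r}$ into itself and is the identity elsewhere, all integrals localize to that annulus and $\|T_{r,s}v\|_{L^p(B_s\setminus B_r)}\le c(n,p)\|v\|_{L^p(B_s\setminus B_r)}$, which is (5); applying (5) to $v=|Du|$ together with (4) gives (6).

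The gain-of-integrability estimates (7), (8) are the heart of the matter. By (4) and H\"older on $B_{\theta(x)}(x)$ one has $|D(T_{r,s}u)(x)|\le c(n)\,\theta(x)^{-n/p}\big(\int_{B_{\theta(x)}(x)}|Du|^p\big)^{1/p}$. Write $x=\rho\omega$ with $\rho\in(r,s)$, so that $\theta(x)=\theta(\rho)$ depends only on $\rho$ and $B_{\theta(\rho)}(x)\subset\{z:\rho-\theta(\rho)<|z|<\rho+\theta(\rho)\}$, and integrate $|D(T_{r,s}u)|^q$ over the sphere $\partial B_\rho$. Putting $\Delta(\rho):=\Xi(\rho+\theta(\rho))-\Xi(\rho-\theta(\rho))$ and using $q\ge p$, one estimates $\int_{\partial B_\rho}\big(\int_{B_{\theta(\rho)}(x)}|Du|^p\big)^{q/p}d\mathcal{H}^{n-1}(x)\le\Delta(\rho)^{q/p-1}\int_{\partial B_\rho}\int_{B_{\theta(\rho)}(x)}|Du|^p\,dz\,d\mathcal{H}^{n-1}(x)$, and then, by Fubini together with the spherical-cap bound $\mathcal{H}^{n-1}(\{x\in\partial B_\rho:|x-z|<\theta(\rho)\})\le c(n)\theta(\rho)^{n-1}$, the double integral is $\le c(n)\theta(\rho)^{n-1}\Delta(\rho)$; combining these yields $\int_{\partial B_\rho}|D(T_{r,s}u)|^q\,d\mathcal{H}^{n-1}\le c\,\theta(\rho)^{\,n-1-\frac{nq}{p}}\Delta(\rho)^{q/p}$. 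Integrating in $\rho$ over $(r,s)$ and splitting at $(r+s)/2$, on $(r,(r+s)/2]$ one has $\theta(\rho)=(\rho-r)/2$ and $\Delta(\rho)\le 3\theta(\rho)\sup_{t\in(r,s)}\tfrac{\Xi(t)-\Xi(r)}{t-r}$ (and symmetrically, with $\tfrac{\Xi(s)-\Xi(t)}{s-t}$, on the other half), so the radial integral reduces to $\int\theta(\rho)^{(n-1)(p-q)/p}\,d\rho$, which converges precisely because $q<\tfrac{n}{n-1}p$ and equals $c\,(s-r)^{\,n-(n-1)q/p}$; taking the $q$-th root gives (8). Estimate (7) is the same computation with $u$ and $\tilde\Xi$ replacing $Du$ and $\Xi$.

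Finally, (9) for $1\le p\le 2$: set $\psi(t):=\big((1+t^2)^{p/2-1}t^2\big)^{1/p}$, so that $|V^{p/2}(z)|^{2/p}=\psi(|z|)$; the function $\psi$ is increasing and doubling (it behaves like $t^{2/p}$ for small $t$ and like $t$ for large $t$), hence comparable to a convex increasing function on $[0,\infty)$. By (4), $|D(T_{r,s}u)(x)|\le c(n)\,T_{r,s}|Du|(x)$, so, by monotonicity and doubling of $\psi$, $\big|V^{p/2}(D(T_{r,s}u)(x))\big|^{2/p}=\psi(|D(T_{r,s}u)(x)|)\le c\,\psi\big(T_{r,s}|Du|(x)\big)$; since $\psi$ is comparable to a convex function, Jensen's inequality gives $\psi\big(T_{r,s}|Du|(x)\big)\le c\,T_{r,s}\big[|V^{p/2}(Du)|^{2/p}\big](x)$, which is (9). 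I expect the main obstacle to be the bookkeeping in (7)--(8): the Fubini/spherical-cap step, which trades the pointwise supremum on $\partial B_\rho$ for a $\theta^{n-1}$-weighted quantity, is exactly what keeps the radial integral finite, with borderline case $q=\tfrac{n}{n-1}p$; a secondary point is making the chain-rule formula for $D(T_{r,s}u)$ rigorous for merely Sobolev $u$, which is handled by the smooth approximation above.
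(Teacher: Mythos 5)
Your proposal is correct, and it is essentially the standard argument: the paper itself does not prove this lemma (it is quoted from Fonseca--Mal\'y and Schmidt), and your reconstruction — chain rule with $|D\theta|\le\frac12$ for (1), (4); Jensen plus the bi-Lipschitz change of variables $\Psi_y$ with $\det D\Psi_y=1+D\theta\cdot y\ge\frac12$ for (5), (6); and for (7), (8) the H\"older bound $|DT_{r,s}u|\le c\,\theta^{-n/p}\bigl(\int_{B_{\theta}(x)}|Du|^p\bigr)^{1/p}$, slicing over spheres, Fubini with the spherical-cap bound $c\,\theta^{n-1}$, and the difference-quotient bound $\Delta(\rho)\le 3\theta(\rho)\sup(\cdot)$, with the radial integral converging exactly because $q<\frac{n}{n-1}p$ — is the same route as in those references, and the exponents work out to $\frac nq-\frac{n-1}{p}$ as claimed. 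Two points deserve one more line each to be fully rigorous: in (3), the passage from ``$T_{r,s}u-u\in W^{1,p}$ vanishes a.e.\ outside $B_s\setminus\overline{B_r}$'' to membership in $W_0^{1,p}(B_s\setminus\overline{B_r})$ uses that the annulus is a Lipschitz (indeed smooth) domain, which you should state; and in (9), ``comparable to a convex increasing function'' should be made explicit — the natural choice is the paper's own $W$-function, i.e.\ $t\mapsto|W^{p/2}(t)|^{2/p}=t^{2/p}(1+t)^{(p-2)/p}$, which is comparable to $\psi$ by \eqref{comparability} and whose convexity on $[0,\infty)$ for $1\le p\le 2$ follows from a short computation (one checks $(\log g')'=\frac{2/p-1}{t}+\frac{1}{2/p+t}-\frac{2/p}{1+t}\ge 0$), after which Jensen plus the doubling bound $\psi(ct)\le c^{2/p}\psi(t)$ closes the argument exactly as you indicate.
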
 
Another lemma that will be useful in obtaining the Caccioppoli estimate is the following, the proof of which can also be found in \cite{Fonseca}.
\begin{lem}\label{Fonseca2}
	Let $-\infty<r<s<+\infty$ and a continuous nondecreasing function $\Xi:[r,s]\to \mathbb{R}$ be given. Then there are $\tilde{r}\in [r,\frac{2r+s}{3}]$ and $\tilde{s}\in [\frac{r+2s}{3},s]$, for which hold:
	$$\frac{\Xi(t)-\Xi(\tilde{r})}{t-\tilde{r}}\le 3\frac{\Xi(s)-\Xi(r)}{s-r}$$ and $$\frac{\Xi(\tilde{s})-\Xi(t)}{\tilde{s}-t}\le 3\frac{\Xi(s)-\Xi(r)}{s-r}$$ for every $t \in (\tilde{r},\tilde{s})$.\\
	In particular, we have $\frac{s-r}{3}\le \tilde{s}-\tilde{r}\le s-r.$
\end{lem}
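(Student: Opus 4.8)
The plan is to reduce the two-sided statement to a one-sided one and then invoke symmetry by reflection. Write $A:=\frac{\Xi(s)-\Xi(r)}{s-r}\ge 0$ (nonnegative because $\Xi$ is nondecreasing) and introduce the continuous auxiliary function $\psi(t):=\Xi(t)-3At$ on $[r,s]$. The core claim I would isolate is: there exists $\tilde r\in\bigl[r,\tfrac{2r+s}{3}\bigr]$ with $\psi(t)\le\psi(\tilde r)$ for every $t\in[\tilde r,s]$; dividing by $t-\tilde r>0$ this is exactly the first asserted inequality, in the stronger form valid all the way up to $s$. Applying the same claim to the reflected function $\widehat\Xi(\tau):=-\Xi(r+s-\tau)$, which is again continuous and nondecreasing with the same average slope $A$, yields $\tilde s\in\bigl[\tfrac{r+2s}{3},s\bigr]$ such that $\Xi(\tilde s)-\Xi(t)\le 3A(\tilde s-t)$ for all $t\in[r,\tilde s]$, i.e. the second inequality. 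Since $(\tilde r,\tilde s)\subseteq[\tilde r,s]\cap[r,\tilde s]$ and $\tilde s\ge\tfrac{r+2s}{3}>\tfrac{2r+s}{3}\ge\tilde r$, both estimates hold simultaneously on $(\tilde r,\tilde s)$, while $\tfrac{s-r}{3}\le\tilde s-\tilde r\le s-r$ is immediate from $r\le\tilde r\le\tfrac{2r+s}{3}$ and $\tfrac{r+2s}{3}\le\tilde s\le s$.

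For the core claim I would let $\tilde r$ be the \emph{smallest} point of $[r,s]$ at which $\psi$ attains its global maximum (it exists since $\psi$ is continuous on a compact interval). The inequality $\psi(t)\le\psi(\tilde r)$ on $[\tilde r,s]$ is then automatic, so the only real point is the localization $\tilde r\le\tfrac{2r+s}{3}$. Argue by contradiction: if $\tilde r-r>\tfrac{s-r}{3}$, then, $\tilde r$ being a maximizer, $\psi(\tilde r)\ge\psi(r)$; were this strict we would get $\Xi(\tilde r)-\Xi(r)>3A(\tilde r-r)\ge A(s-r)=\Xi(s)-\Xi(r)$ (and when $A=0$ simply $\Xi(\tilde r)>\Xi(r)=\Xi(s)$), hence $\Xi(\tilde r)>\Xi(s)$, impossible for a nondecreasing $\Xi$ with $\tilde r\le s$. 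Thus $\psi(\tilde r)=\psi(r)$, so $r$ is also a global maximizer of $\psi$, which forces $\tilde r\le r$ by minimality and contradicts $\tilde r>r+\tfrac{s-r}{3}>r$.

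The one step I expect to require care is precisely this placement of $\tilde r$ in the first third: the naive choice of a maximizer of $\psi$ over the short interval $[r,\tfrac{2r+s}{3}]$ does not work, because $\psi$ can be larger deeper inside $[r,s]$, and then the one-sided inequality fails past $\tfrac{2r+s}{3}$. What keeps the \emph{global} maximizer inside the first third is exactly the monotonicity of $\Xi$, since escaping it would force $\Xi$ to exceed its terminal value $\Xi(s)$. After that, the reflection argument and the division by positive lengths are routine bookkeeping.
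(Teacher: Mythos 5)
Your proof is correct. Note that the paper itself does not prove this lemma at all --- it defers to \cite{Fonseca} and \cite{Schmidt} --- so there is no argument of the author's to compare against; your write-up supplies a complete, self-contained proof, and its key step (taking $\tilde r$ to be the \emph{smallest} global maximizer of $\psi(t)=\Xi(t)-3At$ on all of $[r,s]$ and forcing it into the first third via the monotonicity contradiction $\Xi(\tilde r)>\Xi(s)$, with the equality case excluded by minimality) is sound, as is the reflection $\widehat\Xi(\tau)=-\Xi(r+s-\tau)$ producing $\tilde s$ and the bookkeeping $\frac{s-r}{3}\le\tilde s-\tilde r\le s-r$. One correction to your closing remark, though it does not affect your argument: the ``naive'' choice of a maximizer $\tilde r$ of $\psi$ over the short interval $[r,\frac{2r+s}{3}]$ \emph{does} work, and is essentially the standard Fonseca--Mal\'y route. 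Indeed, for $t\in(\tilde r,\frac{2r+s}{3}]$ the inequality follows from maximality, while for $t\in(\frac{2r+s}{3},s]$ monotonicity gives $\Xi(t)\le\Xi(s)=\Xi(r)+A(s-r)\le\Xi(r)+3A(t-r)\le\Xi(\tilde r)+3A(t-\tilde r)$, the middle step using $t-r\ge\frac{s-r}{3}$ and the last step being exactly $\psi(r)\le\psi(\tilde r)$. This variant avoids your case distinction between strict and non-strict maxima and the need to single out the smallest maximizer, but both arguments are equally elementary and rest on the same mechanism: escaping the first third would force $\Xi$ past its terminal value $\Xi(s)$.
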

Now we can prove the Caccioppoli estimate.
\begin{lem}[Caccioppoli Inequality] \label{caccioppolilemma}
	Let $f$ satisfy $(A.1)$,$(A.2)$ and $(A.3)$ for a given $M$. Choose any positive constant $L>M>0$ and a consider $W^{1,p}$-minimizer $u \in W^{1,p}(B_{\rho}(x_0);\mathbb{R}^N)$ of $\mathcal{F}$ on $B_\rho(x_0)$. Then, for all $\zeta \in \mathbb{R}^N$ and $z \in \mathbb{R}^{nN}$ with $M<|z|<L+1$, we have: \begin{equation}\label{caccioppoli}\Phi_p\left(u,x_0,\frac{\rho}{2},z\right)\le c\left[h\left(\fint_{B_\rho(x_0)}\left|V^{\frac{p}{2}}\left(\frac{v}{\rho}\right)\right|^2\,dx\right)+(\Phi_p(u,x_0,\rho,z))^{\frac{q}{p}}\right]\end{equation}
	where we have set $h(t):=t+t^{\frac{q}{p}}$ and $v(x)=u(x)-\zeta-z(x-x_0)$ and where $c$ denotes a positive constant depending only on $n$,$N$,$p$,$q$,$\Gamma$,$L$,$M$,$\gamma$ and $\Lambda_L:=\sup\limits_{|z|\le L+2} |Df^2(z)|$.
\end{lem}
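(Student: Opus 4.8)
The plan is to run the argument of \cite[Lemma~7.3]{Schmidt}; the asymptotic form of our hypotheses enters only in that the strict $W^{1,p}$-quasiconvexity inequality \eqref{treppiufforte} — available by Corollary \ref{corollario} — may be invoked only when $|z|>M$, and that the estimates of Lemma \ref{Acerbo} require $M<|z|\le L+1$ (one applies that lemma with $L+2$ in place of $L$, consistently with $\Lambda_L$); both constraints hold for the $z$ in the statement. Note also that our standing bound forces $q<\tfrac n{n-1}p$, so Lemma \ref{Fonseca1} is available. After a translation I take $x_0=0$ and set $B_t:=B_t(0)$, $\ell(x):=\zeta+zx$, $v:=u-\ell$, so that $Dv=Du-z$ and $\fint_{B_t}|V^{\frac p2}(Dv)|^2=\Phi_p(u,0,t,z)$. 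First I would apply Lemma \ref{Fonseca2} to $t\mapsto\|v\|^p_{L^p(B_t)}+\|Dv\|^p_{L^p(B_t)}$ on $[\tfrac\rho2,\tfrac34\rho]$ to pick radii $\tfrac\rho2\le r<s<\rho$ with $s-r\ge\tfrac\rho{12}$ for which the supremum quantities occurring in Lemma \ref{Fonseca1}\,(7)--(8) (applied to $v$, and to $\eta v$ below, which only rescales the data by bounded factors) are bounded by $c(s-r)^{-1}\bigl(\|Dv\|^p_{L^p(B_s\setminus B_r)}+(s-r)^{-p}\|v\|^p_{L^p(B_s\setminus B_r)}\bigr)$. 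Then, with a cut-off $\eta\in C_c^\infty(B_s)$, $\eta\equiv1$ on $B_r$, $0\le\eta\le1$, $|D\eta|\le c/(s-r)$, I would set
\begin{equation*}
\varphi:=T_{r,s}(\eta v),\qquad w:=\ell+T_{r,s}\bigl((1-\eta)v\bigr);
\end{equation*}
by Lemma \ref{Fonseca1}\,(1)--(3) and the linearity of $T_{r,s}$ one has $\varphi\in W_0^{1,p}(B_s)$ with $\varphi=v$ on $B_r$, $w-u\in W_0^{1,p}(B_\rho)$ with $w=\ell$ on $B_r$ and $w=u$ on $B_\rho\setminus B_s$, and on the annulus $A:=B_s\setminus B_r$ both $D\varphi$ and $Dw-z$ lie in $L^q$, with norms controlled through Lemma \ref{Fonseca1}\,(7)--(8) and the above choice of radii.

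Next I would combine the two variational facts. Using \eqref{treppiufforte} with the test map $\varphi$ (licit because $|z|>M$) and $\varphi=v$ on $B_r\supseteq B_{\rho/2}$ gives $\gamma\int_{B_{\rho/2}}|V^{\frac p2}(Dv)|^2\le\int_{B_r}[f(Du)-f(z)]+\int_A[f(z+D\varphi)-f(z)]$, while testing the minimality of $u$ with $w$ (note $Dw=z$ on $B_r$, $Dw=Du$ off $B_s$) gives $\int_{B_r}[f(Du)-f(z)]\le\int_A[f(Dw)-f(Du)]$; hence the whole estimate reduces to the annulus,
\begin{equation*}
\gamma\int_{B_{\rho/2}}\bigl|V^{\tfrac p2}(Dv)\bigr|^2\le\int_A\Bigl(\bigl[f(z+D\varphi)-f(z)\bigr]+\bigl[f(Dw)-f(Du)\bigr]\Bigr).
\end{equation*}
Expanding $f(z+\xi)-f(z)=Df(z)\xi+R(\xi)$ with $|R(\xi)|\le c_1|V^{\frac q2}(\xi)|^2$ (Lemma \ref{Acerbo}) for $\xi=D\varphi$, $Dw-z$ and $Dv$, the terms linear in $Df(z)$ add up to $Df(z)\,D(T_{r,s}v-v)$, whose integral over $A$ vanishes since $T_{r,s}v-v\in W_0^{1,p}(A)$ by Lemma \ref{Fonseca1}\,(3); this leaves
\begin{equation*}
\gamma\int_{B_{\rho/2}}\bigl|V^{\tfrac p2}(Dv)\bigr|^2\le c_1\int_A\Bigl(\bigl|V^{\tfrac q2}(D\varphi)\bigr|^2+\bigl|V^{\tfrac q2}(Dw-z)\bigr|^2+\bigl|V^{\tfrac q2}(Dv)\bigr|^2\Bigr).
\end{equation*}

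It remains to estimate this annulus integral, using the $L^q$-bounds of Lemma \ref{Fonseca1}\,(7)--(8) for $D\varphi$ and $Dw-z$, Hölder's and Jensen's inequalities, and the elementary bound $|\xi|^p\le c|V^{\frac p2}(\xi)|^2$, together with $|A|\sim\rho^n$, $s-r\sim\rho$, $\|Dv\|^p_{L^p(A)}\le c\rho^n\Phi_p(u,0,\rho,z)$ and $\|v/\rho\|^p_{L^p(A)}\le c\rho^n\fint_{B_\rho}|V^{\frac p2}(v/\rho)|^2$; dividing by $|B_{\rho/2}|$ then yields \eqref{caccioppoli} with $h(t)=t+t^{q/p}$. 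In the subquadratic case $p\le2$ the passage from $|V^{\frac p2}(DT_{r,s}(\eta v))|^2$ to averages of $|V^{\frac p2}(D(\eta v))|^2$ uses item~(9) of Lemma \ref{Fonseca1}, whereas for $p>2$ one uses instead the convexity of $z\mapsto|W^{\frac p2}(z)|^2$ together with \eqref{comparability}.

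The hard part will be this last step, where the $(p,q)$-gap bites: $Dv=Du-z$ is merely $L^p$ on $A$, so none of $f(z+D\varphi)$, $f(Dw)$, $f(Du)$ is controlled by $\|Dv\|_{L^p}$ in a direct way, and this is precisely why $\varphi$ and $w$ must be built with the smoothing operator $T_{r,s}$ rather than a bare cut-off — items~(7)--(8) of Lemma \ref{Fonseca1} trade $L^q$-norms on the annulus for $L^p$-data, and Lemma \ref{Fonseca2} absorbs the ``$\sup$ over sub-annuli'' factors they carry. The exponents should close exactly under $1<p\le q<p+\tfrac{\min\{2,p\}}{2n}$, which is what makes the $|V^{\frac q2}|$-remainders reappear with the benign power $q/p$ of the excess rather than a worse one. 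The remaining ingredients — the cancellation of the $Df(z)$-terms (used above), the finiteness of the $q$-growth integrals on $A$ (which rests on the local higher integrability $Du\in L^q_{\mathrm{loc}}$ valid in this $(p,q)$-range, cf.\ \cite{Schmidt}), and the bookkeeping of Lemma \ref{Acerbo} uniformly for $M<|z|<L+1$ including the sub-/super-quadratic split — are routine.
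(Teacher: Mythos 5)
There is a genuine gap, and it sits exactly where you predicted the ``hard part'' would be. Your pairing of the two variational inequalities — quasiconvexity tested with $T_{r,s}(\eta v)$ and minimality tested with $w=\ell+T_{r,s}((1-\eta)v)$ — leaves, after expanding everything around $z$ via Lemma \ref{Acerbo}, the standalone remainder $-R(Dv)$ coming from $f(Du)-f(z)$, so your annulus estimate contains the term $\int_A\bigl|V^{\frac q2}(Dv)\bigr|^2\sim\int_A|Du-z|^q$ with the \emph{raw} gradient at $q$-growth. This term cannot be controlled by the right-hand side of \eqref{caccioppoli}: $Dv$ is only known to be in $L^p$, the higher integrability $Du\in L^q_{\mathrm{loc}}$ you invoke is not available here (in Schmidt's scheme it is a consequence of, not an input to, this very Caccioppoli inequality), and even qualitative finiteness would not give a bound by $\Phi_p(u,x_0,\rho,z)^{q/p}$ plus the $v/\rho$ term. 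The paper avoids this by making the two tests conjugate: with $\psi=T_{\tilde r,\tilde s}[(1-\eta)v]$ it takes $\varphi=v-\psi$ in the quasiconvexity inequality and $u-\varphi=\ell+\psi$ as the comparison map, so that $z+D\varphi=Du-D\psi$ and, after the minimality term cancels, every surviving term carries a factor $D\psi$ (namely $\int_0^1[Df(z)-Df(Du-\tau D\psi)]\,d\tau\,D\psi$ and $f(z+D\psi)-f(z)-Df(z)D\psi$, see \eqref{puntodisplit}); the raw $Dv$ then enters only through $|V^{q-1}(Dv)|\,|D\psi|$, which H\"older with $q-1<p$ and the $L^q$-bounds for $D\psi$ from Lemma \ref{Fonseca1} can absorb. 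Your choice $\varphi=T_{r,s}(\eta v)$ destroys this cancellation (the mismatch is $v-T_{r,s}v$ on the annulus), and I do not see how to repair the argument without reverting to the paper's test function.

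A second, independent problem is your treatment of the radii. You select \emph{one} good pair $r<s$ with $s-r\sim\rho$ via Lemma \ref{Fonseca2} and then claim that dividing by $|B_{\rho/2}|$ yields \eqref{caccioppoli}. But the annulus estimate unavoidably contains $\int_{B_s\setminus B_r}\bigl|V^{\frac p2}(Dv)\bigr|^2$ at full strength (it appears in the paper's $\Delta$ as well), and with a single pair of radii this can only be bounded by $c\,\rho^n\Phi_p(u,x_0,\rho,z)$ — a first-power excess term that is not allowed on the right-hand side of \eqref{caccioppoli} and cannot be absorbed into the left, since the constant is not small. The paper needs the estimate for \emph{all} $\frac\rho2\le r<s\le\rho$, then performs hole-filling (adding $C_1\int_{B_r}|V^{\frac p2}(Dv)|^2$ to both sides) and invokes the iteration Lemma 6.6 of \cite{Schmidt} to eliminate that term; Lemma \ref{Fonseca2} is used inside each fixed pair $(r,s)$ only to control the sup-quotients in Lemma \ref{Fonseca1}(7)--(9), not to replace the absorption step. (Minor further points: in the superquadratic case the paper estimates $I_1$ via items (6) and (8) of Lemma \ref{Fonseca1} rather than via convexity of $|W^{\frac p2}|^2$, and the choice of $\Xi$ differs between the cases $p>2$ and $p\le 2$.)
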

\begin{proof}
	Assume for simplicity $x_0=0$ and choose $$\frac{\rho}{2}\le r < s \le \rho.$$ Define $$\Xi(t):=\int_{B_t}\left[|Dv|^p+\left|\frac{v}{s-r}\right|^p\right]\,dx.$$
	We choose in addition $r\le \tilde{r}< \tilde{s}\le s$ as in Lemma \ref{Fonseca2}. Let $\eta$ denote a smooth cut-off functions with support in $B_{\tilde{s}}$ satisfying $\eta\equiv1$ in $\overline{B_{\tilde{r}}}$ and $0\le \eta \le 1$, $|\nabla\eta|\le \frac{2}{\tilde{s}-\tilde{r}}$ on $B_\rho$. Using the operator from Lemma \ref{Fonseca1}, we set $$\psi:=T_{\tilde{r},\tilde{s}}[(1-\eta)v]\text{ and } \varphi:=v-\psi.$$
	Using properties $(2)$ and $(3)$ from lemma \ref{Fonseca1}, we have $\varphi \in W_0^{1,p}(B_{\tilde{s}};\mathbb{R}^N)$ and $\varphi=v$ on $B_{\tilde{r}}$. Furthermore, we see $$Du-z=Dv=D\varphi+D\psi\text{ on } B_\rho.$$ Using $(A.3')$ (see Corollary \ref{corollario}), from lemma \ref{lemmaasintotico} we obtain that, for every $z$ such that $L+1>|z|>M$ \begin{multline*}\gamma \int_{B_{\tilde{r}}}|V^{\frac{p}{2}}(Dv)|^2\,dx = \gamma \int_{B_{\tilde{r}}}|V^{\frac{p}{2}}(D\varphi)|^2\,dx   \le  \gamma \int_{B_{\tilde{s}}}|V^{\frac{p}{2}}(D\varphi)|^2\,dx  =  \\ = \gamma \int_{B_{\tilde{s}}}\left(1+|D\varphi|^2\right)^{\frac{p}{2}-1}|D\varphi|^2\,dx  \le \int_{B_{\tilde{s}}}[f(z+D\varphi)-f(z)]\,dx =\\= \int_{B_{\tilde{s}}}[f(Du-D\psi)-f(Du)]\,dx +\int_{B_{\tilde{s}}}[f(Du)-f(Du-D\psi)]\,dx +\\ +\int_{B_{\tilde{s}}} [f(z+D\psi)-f(z)]\,dx.\end{multline*}
	Applying the minimality of $u$ and lemma \ref{Acerbo} and adding and subtracting $Df(z)D\psi(x)$ we conclude that $\forall z: \,L+1>|z|>M$,
	\begin{multline} \label{puntodisplit} \gamma \int_{B_r} |V^{\frac{p}{2}}(Dv)|^2\,dx \le \\ \le \int\limits_{B_{\tilde{s}}} \left[\int_0^1 \left( Df(z)-Df(Du-\tau D\psi)\right)\,d\tau D\psi+f(z+D\psi)-f(z)-Df(z)D\psi\right]\,dx \\\le c\int_{B_{\tilde{s}}}\left[\int_0^1 |V^{q-1}(Dv-\tau D\psi)|\,d\tau|D\psi|+|V^{\frac{q}{2}}(D\psi)|^2\right]\,dx .\end{multline}
	Starting from now, we divide the proof in two cases, beginning from the case $p>2$.\\
	Setting $R:=B_{\tilde{s}}\setminus B_{\tilde{r}}$, recalling $\psi\equiv 0$ on $B_{\tilde{r}}$ and some elementary properties of $V$, i.e. \begin{equation}\label{prop1}|V^\beta(A+B)|\le c[|V^\beta(A)|+|V^\beta(B)|]\end{equation} and \begin{equation}\label{prop2} \min\{t^2,t^p\}|V^{\frac{p}{2}}(A)|^2\le |V^{\frac{p}{2}}(tA)|^2\le \max\{t^2,t^p\}|V^{\frac{p}{2}}(A)|^2\end{equation} (see \cite{Schmidt}, Definition 6.1), we infer: \begin{equation}\label{gelato}\int_{B_r} |V^{\frac{p}{2}}(Dv)|^2\,dx\le c\left[\int_R|V^{\frac{q}{2}}(D\psi)|^2\,dx+\int_R|V^{q-1}(Dv)||D\psi|\,dx\right]=:c[I_1+I_2]\end{equation}
	Let us introduce the abbreviation $$\Delta:=\int_{B_s\setminus B_r}\left[\left|V^{\frac{p}{2}}(Dv)\right|^2+\left|V^{\frac{p}{2}}\left(\frac{v}{s-r}\right)\right|^2\right]\,dx.$$
	Using properties $(6)$ and $(8)$ of lemma \ref{Fonseca1} ($q< \frac{np}{n-1}$) and lemma \ref{Fonseca2} we get: 
	\begin{multline} \label{pollo}
	I_1 \le c \left[ \int_R |D\psi|^2\,dx+\int_R |D\psi|^q\,dx\right]  \le \\ \le c \left[\int_R |D[(1-\eta)v]|^2\,dx+\int_R |DT_{\tilde{r},\tilde{s}}\left[(1-\eta)v\right]|^q\,dx\right] \\ \le c \Bigg[\Delta+(s-r)^n\left((s-r)^{1-n}\sup\limits_{t \in (\tilde{r},\tilde{s})}\frac{\Xi(t)-\Xi(\tilde{r})}{t-\tilde{r}}\right)^{\frac{q}{p}}+\\+(s-r)^n\left((s-r)^{1-n} \sup\limits_{t \in (\tilde{r},\tilde{s})}\frac{\Xi(\tilde{s})-\Xi(t)}{\tilde{s}-t}\right)^{\frac{q}{p}}\Bigg]\le \\ \le c\left[\Delta+(s-r)^{n}\left(\frac{\Delta}{(s-r)^n}\right)^{\frac{q}{p}}\right].
	 \end{multline}
	 Using $q<p+\frac{1}{n}<p+1$ and H\"older's inequality we can treat $I_2$ in a similar fashion:
	 \begin{multline} \label{architetto}
	 I_2\le c\int_R (|Dv||D\psi|+|Dv|^{q-1}|D\psi|)\,dx\le \\ \le c\Big[\Big(\int_R |Dv|^2\,dx\Big)^{\frac{1}{2}}\Big(\int_R|D\psi|^2\,dx\Big)^{\frac{1}{2}}+\\+\Big(\int_R|Dv|^p\,dx\Big)^{\frac{q-1}{p}}\Big(\int_R |D\psi|^{\frac{p}{p+1-q}}\,dx\Big)^{\frac{p+1-q}{p}}\Big]\le \\ \le c\Big[\Delta+(s-r)^n\Big(\frac{\Delta}{(s-r)^{n}}\Big)^{\frac{q}{p}}\Big].	 \end{multline}
	 For the last inequality, notice that $\int_R|Dv|^2\,dx$ and $\int_R |Dv|^p\,dx$ are obviously less than $\int_{B_s\setminus B_r}|V^{\frac{p}{2}}(D\psi)|^2$, which is less than $\Delta$, and that $\int_R |D\psi|^2\,dx<\Delta$.\\
	 
	 Combining \eqref{gelato}, \eqref{pollo} and \eqref{architetto} we arrive at
	 $$\int_{B_r} |V^{\frac{p}{2}}(Dv)|^2\,dx\le C_1 \left[\Delta+(s-r)^n\left(\frac{\Delta}{(s-r)^n}\right)^{\frac{q}{p}}\right],$$ where $C_1$ denotes a positive fixed constant depending on $n,N,p,q,\Gamma,\gamma,L,\Lambda_L$.\\
	 Adding $C_1\int_{B_r} |V^{\frac{p}{2}}(Dv)|^2\,dx$ on both sides and dividing by $1+C_1$ we see:
	 \begin{multline}
	 \int_{B_r} |V^{\frac{p}{2}}(Dv)|^2\,dx\le \frac{C_1}{1+C_1}\int_{B_s}|V^{\frac{p}{2}}(Dv)|^2\,dx+\int_{B_\rho}\left|V^{\frac{p}{2}}\left(\frac{v}{s-r}\right)\right|^2\,dx+\\+(s-r)^n\left(\frac{1}{(s-r)^n}\int_{B_\rho}\left[\left|V^{\frac{p}{2}}(Dv)\right|^{2}+\left|V^{\frac{p}{2}}\left(\frac{v}{s-r}\right)\right|^2\right]\,dx\right)^{\frac{q}{p}}.
	 \end{multline}
	 Using Lemma 6.6 from \cite{Schmidt}, we have:
	 \begin{multline}
	 \int_{B_{\rho/2}} |V^{\frac{p}{2}}(Dv)|^2\,dx \le \\ \le c\left[\fint_{B_\rho} \left|V^{\frac{p}{2}}\left(\frac{v}{\rho}\right)\right|^2\,dx+\left(\fint_{B_\rho}\left[\left|V^{\frac{p}{2}}(Dv)\right|^{2}+\left|V^{\frac{p}{2}}\left(\frac{v}{\rho}\right)\right|^{2}\right]\,dx\right)^{\frac{q}{p}}\right],
	 \end{multline}
	 which proves the claim in the case $p>2$.\\
	 We now approach the proof in the case $p\le 2$ restarting from \eqref{puntodisplit} and using a different argument.\\
	 We use the notations of the previous case except for the following modification:
	 $$\Xi(t):=\int_{B_t}\left[\left|V^{\frac{p}{2}}(Dv)\right|^2+\left|V^{\frac{p}{2}}\left(\frac{v}{s-r}\right)\right|^2\right]\,dx.$$\\
	 Exactly as before, we reach
	 $$\int_{B_r} |V^{\frac{p}{2}}(Dv)|^2\,dx \le c\int_{B_{\tilde{s}}}\left[\int_0^1 |V^{q-1}(Dv-\tau D\psi)|\,d\tau|D\psi|+|V^{\frac{q}{2}}(D\psi)|^2\right]\,dx.$$
	 By Acerbi and Fusco in \cite{AcerbiFusco3}, Lemma 2.1, it was proven that for any $z_1,z_2 \in \mathbb{R}^{nN}$ one has:
	 $$\int_0^1 (1+|z_1+tz_2|^2)^{\frac{p-2}{2}}\, dt \le c(1+|z_1|^2+|z_2|^2)^{\frac{p-2}{2}}.$$
	 In our case, we get:
	 \begin{multline*}\int_{B_r} |V^{\frac{p}{2}}(Dv)|^2\,dx\le \\ \le  c\left[\int_R \left|V^{\frac{q}{2}}(D\psi)\right|^2\,dx+\int_R \left(1+|Dv|^2+|D\psi|^2\right)^{\frac{q-2}{2}}(|Dv|+|D\psi|)\|D\psi|\right]\\ =:c[(I)+(II)].\end{multline*}
	 To estimate $(I)$, we use the obvious property that $$(1+|z_1|^2)^\frac{p}{2}\le 1+(1+|z_1|^2)^\frac{p-2}{2}|z_1|^2$$ and H\"older and Young inequalities, obtaining:
	 \begin{multline*}
	 (I)\le \int_R\left(1+\left|V^{\frac{p}{2}}(D\psi)\right|^2\right)^{\frac{q-p}{p}}\left|V^{\frac{p}{2}}(D\psi)\right|^2\,dx \le \\ \le c \left[\int_R \left| V^{\frac{p}{2}}(D\psi)\right|^2\,dx+\int_R\left|V^{\frac{p}{2}}(D\psi)\right|^{\frac{2q}{p}}\,dx \right] = \\=:c\left[\int_R \left| V^{\frac{p}{2}}(D\psi)\right|^2\,dx+(III)\right]\le c\Delta+(III).
	 \end{multline*}
	 Now, using properties $7$ and $9$ from Lemma \ref{Fonseca1} and Lemma \ref{Fonseca2}, we have
	 \begin{multline*}
	 (III)\le c \int_R \left(T_{\tilde{r},\tilde{s}}\left[\left|V^{\frac{p}{2}}(D[(1-\eta)v])\right|^{\frac{2}{p}}\right]\right)^q\,dx \le \\ \le c(\tilde{s}-\tilde{r})^n\Big(\sup\limits_{t \in (\tilde{r},\tilde{s})}\frac{(\tilde{s}-\tilde{r})^{1-n}}{t-\tilde{r}}\int_{B_t\setminus B_{\tilde{r}}}\left|V^{\frac{p}{2}}(D[(1-\eta)v])\right|^2\,dx+ \\+ \sup\limits_{t \in (\tilde{r},\tilde{s})}\frac{(\tilde{s}-\tilde{r})^{1-n}}{\tilde{s}-t}\int_{B_{\tilde{s}}\setminus B_t}\left|V^{\frac{p}{2}}(D[(1-\eta)v])\right|^2\,dx \Big)^{\frac{q}{p}} \le \\ \le c(s-r)^n\left[(s-r)^{1-n}\left(\sup\limits_{t\in (\tilde{r},\tilde{s})}\frac{\Xi(t)-\Xi(\tilde{r})}{t-\tilde{r}}+\sup\limits_{t\in (\tilde{r},\tilde{s})}\frac{\Xi(\tilde{s})-\Xi(t)}{\tilde{s}-t}\right)\right]^{\frac{q}{p}} \le \\ \le c(s-r)^n\left(\frac{\Delta}{(s-r)^n}\right)^{\frac{q}{p}}
	 \end{multline*}
	 So we have proved that:
	 $$(I)\le c\left[\Delta+(s-r)^n\left(\frac{\Delta}{(s-r)^n}\right)^{\frac{q}{p}}\right]$$
	 To estimate $(II)$, we make repeated use of Young inequality and of the fact that $$(1+|z_1|^2+|z_2|^2)^\frac{p}{2}\le 1+(1+|z_1|^2+|z_2|^2)^\frac{p-2}{2}(|z_1|^2+|z_2|^2)$$
	 In particular
	 \begin{multline*}
	 (II)\le c \Big[ \int_R (1+|Dv|^2+|D\psi|^2)^{\frac{p-2}{2}}(|Dv|+|D\psi|)|D\psi|\,dx+\\+ \int_R(1+|Dv|^2+|D\psi|^2)^{(p-2)\frac{2}{2p}}(|Dv|^2+|D\psi|^2)^\frac{q-p}{p}(|Dv|+|D\psi|)|D\psi|\,dx\Big] \le \\ \le \Big[\int_R\left|V^{\frac{p}{2}}(Dv)\right|^2\,dx+\int_R\left|V^{\frac{p}{2}}(D\psi)\right|^2\,dx+\\+\int_R\left|V^{\frac{p}{2}}(D\psi)\right|^{\frac{2q}{p}}\,dx+\int_R\left|V^{\frac{p}{2}}(Dv)\right|^{\frac{2q}{p}-1}\left|V^{\frac{p}{2}}(D\psi)\right|\,dx\Big]
	 \end{multline*}
	 The first three summands on the right hand side of this inequality are easily controlled by $c\Delta$ or have already been encountered throughout the proof, so that we only need to estimate $$(IV):=\int_R\left|V^{\frac{p}{2}}(Dv)\right|^{\frac{2q}{p}-1}\left|V^{\frac{p}{2}}(D\psi)\right|\,dx.$$
	 Using $q<\frac{3}{2}p$, H\"older inequality yields:
	 $$(IV)\le \left(\int_R \left|V^{\frac{p}{2}}(Dv)\right|^2\,dx\right)^{\frac{2q-p}{2p}}\left(\int_R \left|V^{\frac{p}{2}}(D\psi)\right|^{\frac{2p}{3p-2q}}\right)^{\frac{3p-2q}{2p}}$$
	 The first factor can be easily estimated by $\Delta^{\frac{2q-p}{2p}}$, while using the fact that $q<p+\frac{p}{2n}$, and hence $q<\frac{3}{2}p$, we can estimate the other factor as we did for $(III)$, so that $$(IV)\le c (s-r)^n\left(\frac{\Delta}{(s-r)^n}\right)^{\frac{q}{p}}$$.\\
	 In conclusion, we have 
	 $$\int_{B_r} \left|V^{\frac{p}{2}}(Dv)\right|^2\,dx \le  c\left[\Delta+ (s-r)^n\left(\frac{\Delta}{(s-r)^n}\right)^{\frac{q}{p}}\right]$$
	 and we finish the proof as we did for $p>2$.
\end{proof}
\begin{rmk}[Schmidt's Remark 7.4 in \cite{Schmidt}]\label{remark}
	Let us mention that in the case $q=p$, the inequality \eqref{caccioppoli} holds without the second term on its right-hand side. This can be inferred directly from the proofs. However, in the case $q>p$ we will see that this second term is arbitrarily small. This is the reason why we call \eqref{caccioppoli} a "Caccioppoli inequality".
\end{rmk}
\section{Almost $\mathcal{A}$-harmonicity}
Consider a bilinear form $\mathcal{A}$ on $\mathbb{R}^{nN}$. We assume that the upper bound \begin{equation}\label{limitatezza}|\mathcal{A}|\le \Lambda \end{equation} with $\Lambda>0$ holds and that the Legendre-Hadamard condition \begin{equation}\label{Legendre}\mathcal{A}(\zeta x^T,\zeta x^T)\ge \lambda |x|^2|\zeta|^2 \quad \text{ for all } x\in \mathbb{R}^n, \zeta \in \mathbb{R}^N\end{equation} with ellipticity constant $\lambda>0$ is satisfied.\\ We say that $h\in W^{1,1}_{loc}(\Omega,\mathbb{R}^N)$ is $\mathcal{A}$-harmonic on $\Omega$ iff $$\int_\Omega \mathcal{A}(Dh,D\varphi)\,dx=0$$ holds for all smooth $\varphi:\Omega\to \mathbb{R}^N$ with compact support in $\Omega$.\\
The following two lemmas, whose proof can be found in (\cite{Schmidt}, Lemma 7.8, 7.7 and 6.8) will enable us to approximate $W^{1,p}$-minimizers with functions that are $\mathcal{A}$-harmonic. 
\begin{lem} \label{lemmanove}
	Let $f$ satisfy $(A.1)$ and $(A.3')$ for a given $M>0$. Choose any $M>0$. Then, for any given $z$ such that $|z|>M$, we have that $\mathcal{A}=D^2f(z)$ satisfies the Legendre-Hadamard condition $$\mathcal{A}(\zeta x^T,\zeta x^T)\ge \lambda |x|^2|\zeta|^2 \quad \text{ for all } x \in \mathbb{R}^n\text{ and } \zeta \in \mathbb{R}^N$$ with ellipticity constant $\lambda=2\gamma$.
	\begin{proof}
		Let $u$ be the affine function $u(x)=zx$ with $z$ such that $|z|>M$. Quasiconvexity in $z$ ensures that $u$ is a $W^{1,p}$-minimizer of the functional $\mathcal{F}$ induced by $f$ and that the function:
		$$G_\varphi(t)=\mathcal{F}_{|B_1}(u+t\varphi)-\gamma\int_{B_1} (1+|tD\varphi|^2)^{\frac{p}{2}-1}|tD\varphi|^2 \, dx$$ has a minimum in $t=0$ for any $\varphi \in W_0^{1,p}(B_1,\mathbb{R}^N)$ and, in the same way as it is done in (\cite{Giusti}, Prop. 5.2), from $G'(0)=0$ and $G''(0)\ge0$ the Legendre-Hadamard condition will follow.\\
		As a matter of fact, from $G''(0)\ge 0$, we obtain:
		\begin{equation}\label{dasommare}\int_{B_1} \frac{\partial^2 F}{\partial z_k^\alpha\partial z_j^\beta}(z_0)D_k\varphi^\alpha D_j\varphi^\beta\,dx\ge 2\gamma \int_{B_1} |D\varphi^2| \,dx\end{equation}
		for every $\varphi \in C_c^1(B_1,\mathbb{R}^N)$.
		Let us $\varphi=\lambda+i\mu$ and write \eqref{dasommare} for $\lambda$ and for $\mu$, i.e.:
		\begin{equation}\int_{B_1} \frac{\partial^2 F}{\partial z_k^\alpha\partial z_j^\beta}(z_0)D_k\lambda^\alpha D_j\lambda^\beta\,dx\ge 2\gamma \int_{B_1} |D\lambda^2| \,dx\end{equation}
		and
		\begin{equation}\int_{B_1} \frac{\partial^2 F}{\partial z_k^\alpha\partial z_j^\beta}(z_0)D_k\mu^\alpha D_j\mu^\beta\,dx\ge 2\gamma \int_{B_1} |D\mu^2|\,dx\end{equation}	
		we obtain:
		\begin{equation}\int_{B_1} \frac{\partial^2 F}{\partial z_k^\alpha\partial z_j^\beta}(z_0)\left[D_k\lambda^\alpha D_j\lambda^\beta+D_k\mu^\alpha D_j\mu^\beta\right]\,dx\ge 2\gamma \int_{B_1} |D\lambda^2|+|D\mu^2|\,dx\end{equation}
		and hence:
		$$\text{Re}\int_{B_1}\frac{\partial^2 F}{\partial z_k^\alpha \partial z_j^\beta}(z_0)D_k\varphi^\alpha D_j\overline{\varphi}^\beta\,dx \ge 2\gamma \int_{B_1}|D\varphi|^2 \,dx$$
		Now, consider any $\xi \in \mathbb{R}^n$, $\eta \in \mathbb{R}^N$, $\tau \in \mathbb{R}$ and $\varPsi(x)\in C_c^\infty(B_1,\mathbb{R})$ and take $\varphi$ to be $\varphi(x)=\eta e^{i\tau(\xi \cdot x)}\varPsi(x)$. Since $\varphi^\alpha(x)=\eta^\alpha\varPsi(x)e^{i\tau \xi \cdot x}$, we have
		$$\int_{B_1}\frac{\partial^2 F}{\partial z_k^\alpha \partial z_j^\beta}(z_0)\eta^\alpha\eta^\beta[\tau^2\xi_k\xi_j\varPsi^2+D_k\varPsi D_j\varPsi]\,dx \ge 2\gamma|\eta|^2\int_{B_1}(|D\varPsi|^2+\tau^2|\xi|^2|\varPsi(x)|^2)\,dx.$$
		Dividing by $\tau^2$ and letting $\tau \to \infty$ we get:
		$$\int_{B_1}\frac{\partial^2 F}{\partial z_k^\alpha \partial z_j^\beta}(z_0)\xi_k\xi_j\eta^\alpha\eta^\beta\varPsi^2(x)\,dx\ge 2\gamma|\eta|^2|\xi|^2\int_{B_1}\varPsi^2(x)\,dx $$
		and since this holds for all $\varPsi \in C_c^\infty(B_1,\mathbb{R})$ the proposition is proved. 
	\end{proof}
\end{lem}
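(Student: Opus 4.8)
The plan is to read off, from asymptotic strict quasiconvexity $(A.3')$ at the fixed matrix $z$, the second--variation inequality for the functional penalized by the quasiconvexity barrier, and then to run the classical Fourier--symbol argument (as in \cite{Giusti}, Prop.\ 5.2) to go from that integral inequality to the pointwise Legendre--Hadamard condition. So, fix $z$ with $|z|>M$ and, for $\varphi\in C_0^\infty(B_1,\mathbb{R}^N)$, consider
$$G_\varphi(t):=\int_{B_1}f(z+tD\varphi)\,dx-\gamma\int_{B_1}\bigl(1+|tD\varphi|^2\bigr)^{\frac p2-1}|tD\varphi|^2\,dx,\qquad t\in\mathbb{R}.$$
Applying $(A.3')$ with test function $t\varphi$ gives $G_\varphi(t)\ge|B_1|\,f(z)=G_\varphi(0)$ for every $t$, so $t=0$ is an interior minimum. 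Since $|D\varphi|$ is bounded and $f\in C^2$, one differentiates twice under the integral sign; the penalty integrand is $C^2$ in $t$ near $0$ and equals $t^2|D\varphi|^2+O(t^4)$ there (because $(1+s)^{\frac p2-1}s=s+O(s^2)$ as $s\to0^+$), so its second derivative at $t=0$ is $2|D\varphi|^2$. Hence $G_\varphi'(0)=0$ and $G_\varphi''(0)\ge0$, and the latter reads
$$\int_{B_1}\frac{\partial^2 f}{\partial z_k^\alpha\partial z_j^\beta}(z)\,D_k\varphi^\alpha D_j\varphi^\beta\,dx\ \ge\ 2\gamma\int_{B_1}|D\varphi|^2\,dx\qquad\text{for all }\varphi\in C_0^\infty(B_1,\mathbb{R}^N).$$

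Now I would pass to the pointwise inequality in the standard way. First complexify: writing $\varphi=\lambda+i\mu$ with $\lambda,\mu\in C_0^\infty(B_1,\mathbb{R}^N)$, applying the displayed inequality to $\lambda$ and to $\mu$ and adding, and using the symmetry of $D^2f(z)$ in the pairs $(k\alpha)\leftrightarrow(j\beta)$ (which makes the contraction with the antisymmetric imaginary cross terms vanish), one obtains the same estimate with $D_k\varphi^\alpha D_j\varphi^\beta$ replaced by $\operatorname{Re}\bigl(D_k\varphi^\alpha\,\overline{D_j\varphi^\beta}\bigr)$, for complex $\varphi$. Next, test with $\varphi(x)=\eta\,e^{i\tau(\xi\cdot x)}\varPsi(x)$, where $\eta\in\mathbb{R}^N$, $\xi\in\mathbb{R}^n$, $\tau\in\mathbb{R}$ and $\varPsi\in C_c^\infty(B_1,\mathbb{R})$; expanding $D_k\varphi^\alpha$, the symmetry once more kills the term linear in $\tau$ and one is left with
$$\int_{B_1}\frac{\partial^2 f}{\partial z_k^\alpha\partial z_j^\beta}(z)\,\eta^\alpha\eta^\beta\bigl(D_k\varPsi\,D_j\varPsi+\tau^2\xi_k\xi_j\varPsi^2\bigr)\,dx\ \ge\ 2\gamma|\eta|^2\int_{B_1}\bigl(|D\varPsi|^2+\tau^2|\xi|^2\varPsi^2\bigr)\,dx.$$
Dividing by $\tau^2$ and letting $\tau\to\infty$, then using $\int_{B_1}\varPsi^2>0$ for $\varPsi\not\equiv0$, gives $\frac{\partial^2 f}{\partial z_k^\alpha\partial z_j^\beta}(z)\,\eta^\alpha\eta^\beta\xi_k\xi_j\ge2\gamma|\eta|^2|\xi|^2$ for all $\eta\in\mathbb{R}^N$, $\xi\in\mathbb{R}^n$, which is precisely $\mathcal{A}(\eta\xi^T,\eta\xi^T)\ge2\gamma|\xi|^2|\eta|^2$ with $\mathcal{A}=D^2f(z)$, i.e.\ the Legendre--Hadamard condition with $\lambda=2\gamma$.

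I do not expect a genuine obstacle. The one point needing care is the twice differentiability of $t\mapsto G_\varphi(t)$ at $t=0$ in the subquadratic range $p<2$ together with the verification that the quasiconvexity barrier contributes exactly $2\gamma|D\varphi|^2$ to $G_\varphi''(0)$; after that the proof is the classical passage from a G{\aa}rding--type integral inequality to an algebraic ellipticity inequality. Note that $(A.2)$ plays no role: only the $C^2$--regularity of $f$ and the asymptotic quasiconvexity $(A.3')$ at the chosen $z$ enter, and one never needs global minimality of affine maps, only the elementary inequality $G_\varphi(t)\ge G_\varphi(0)$ for $\varphi\in C_0^\infty$, which is $(A.3')$ applied to $t\varphi$.
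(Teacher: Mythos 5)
Your proposal is correct and follows essentially the same route as the paper: you form the penalized function $G_\varphi(t)$, use $(A.3')$ applied to $t\varphi$ to see that $t=0$ is a minimum, read off the G{\aa}rding-type inequality from $G_\varphi''(0)\ge 0$, and then run the classical complexification/oscillating test-function argument of Giusti to reach the pointwise Legendre--Hadamard condition with constant $2\gamma$. Your explicit check that the penalty term contributes exactly $2|D\varphi|^2$ to the second derivative (also for $p<2$) is a point the paper leaves implicit, but the argument is the same.
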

\begin{rmk}
	Assume $f \in C^2_{\text{loc}}(\mathbb{R}^{nN})$. Then for each $L>0$, there is a modulus of continuity $\omega_L:[0,+\infty[\to [0,+\infty[$ satisfying $\lim\limits_{z \to 0} \omega_L(z)=0$ such that for all $z_1,z_2 \in \mathbb{R}^{nN}$ we have:
	$$|z_1|\le L, \ |z_2|\le L+1 \Rightarrow |D^2f(z_1)-D^2f(z_2)|\le \omega_L(|z_1-z_2|^2).$$
	Moreover, $\omega_L$ can be chosen such that the following properties hold:
	\begin{enumerate}
		\item $\omega_L$ is non-decreasing,
		\item $\omega^2_L$ is concave,
		\item $\omega^2_L(z)\ge z$ for all $z \ge 0$.
	\end{enumerate}
\end{rmk}
\begin{lem} \label{dieci}
	Let $f$ satisfy $(A.1)$,$(A,2)$,$(A.3)$ for a given $M>0$. Choose any $L>M>0$ and take $u \in W^{1,p}$ to be a $W^{1,p}$-minimizer of $\mathcal{F}$ on some ball $B_\rho(x_0)$, where $q\le p+1$. Then for all $z:\ M<|z|\le L$ and $\varphi \in C_c^\infty(B_\rho(x_0))$ we have
	\begin{equation} \label{questarobaqua}
	\left| \fint_{B_\rho(x_0)} D^2f(z)(Du-z,D\varphi)\,dx \right|\le c\sqrt{\Phi_p}\omega_{L}(\Phi_p)\sup\limits_{B_\rho(x_0)}|D\varphi|.
	\end{equation}
	where $\Phi_p:=\Phi_p(u,x_0,\rho,z)$, the constant $c$ depends only on $n$,$N$,$p$,$q$,$\Gamma$,$L$ and $\omega_{L}$ is the abovementioned modulus of continuity (see also \cite{Schmidt}).
\end{lem}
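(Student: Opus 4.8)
The starting point is the Euler--Lagrange system. Since $u$ is a $W^{1,p}$-minimizer and $\varphi\in C_c^\infty(B_\rho(x_0))$, comparing $u$ with $u+t\varphi$ and differentiating at $t=0$ gives $\int_{B_\rho(x_0)}Df(Du)\,D\varphi\,dx=0$ (one must check this is legitimate: by the growth bound $(A.2)$ and the gradient bound $|Df(z)|\le\Gamma_2(1+|z|^{q-1})$ from the Marcellini lemma, together with $q\le p+1$ so that $q-1\le p/(p/(q-1))$ stays integrable against $W^{1,p}$ functions, difference-quotient arguments justify passing the derivative inside). The plan is then to use this together with the trivial identity $\int_{B_\rho(x_0)}D^2f(z)(z,D\varphi)\,dx=0$ (constant argument, $\varphi$ compactly supported) to write
\begin{equation*}
\fint_{B_\rho(x_0)}D^2f(z)(Du-z,D\varphi)\,dx=\fint_{B_\rho(x_0)}\bigl[D^2f(z)(Du-z)-\bigl(Df(Du)-Df(z)\bigr)\bigr]D\varphi\,dx,
\end{equation*}
where I have inserted $Df(z)$, whose integral against $D\varphi$ also vanishes. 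Thus the whole quantity is controlled by $\sup|D\varphi|$ times the average of the linearization error $|Df(Du)-Df(z)-D^2f(z)(Du-z)|$.

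Next I would split $B_\rho(x_0)$ into the ``good set'' $G:=\{x:\ |Du(x)-z|\le 1\}$ and the ``bad set'' $B:=\{x:\ |Du(x)-z|>1\}$. On $G$, both $z$ and $Du(x)$ lie in a fixed ball (radius $\le L+1$), so by $C^2$-regularity and the modulus of continuity $\omega_L$ from the preceding Remark, Taylor's theorem with integral remainder gives
\begin{equation*}
|Df(Du)-Df(z)-D^2f(z)(Du-z)|\le \omega_L(|Du-z|^2)\,|Du-z|\le \omega_L(c\,|V^{p/2}(Du-z)|^{2})\,|Du-z|
\end{equation*}
on $G$ (using that on $G$ the quantity $|Du-z|^2$ and $|V^{p/2}(Du-z)|^2$ are comparable up to a constant). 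Integrating, using concavity of $\omega_L^2$ with Jensen's inequality and Cauchy--Schwarz, the contribution of $G$ is bounded by $c\sqrt{\Phi_p}\,\omega_L(\Phi_p)$ — this is exactly the target form. On the bad set $B$, I would instead use the global bounds: $|Df(Du)-Df(z)|\le c_2|V^{q-1}(Du-z)|\le c\,|Du-z|^{q-1}$ from Lemma \ref{Acerbo}, and $|D^2f(z)(Du-z)|\le \Lambda_L|Du-z|$, so the integrand is bounded by $c(|Du-z|^{q-1}+|Du-z|)$, and since $|Du-z|>1$ and $q-1\le p$, both are dominated by $c|Du-z|^{p}\le c|V^{p/2}(Du-z)|^{2}$. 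Hence the $B$-contribution is bounded by $c\,\Phi_p$; one then uses property (3) of the Remark, $\omega_L^2(t)\ge t$, to absorb $\Phi_p\le\sqrt{\Phi_p}\,\omega_L(\Phi_p)$ provided $\Phi_p$ is at most a fixed constant (and if $\Phi_p$ is large the estimate is vacuous after adjusting $c$, since $|Df(z)|$ is bounded on $|z|\le L$ and everything is controlled by $\Phi_p^{q/p}$-type terms). Combining the two sets yields \eqref{questarobaqua}.

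The main obstacle I expect is twofold. First, the rigorous justification of the Euler--Lagrange equation in this $(p,q)$-setting: one does not a priori know $Du\in L^q$, so $Df(Du)$ need not be integrable, and the variation $\int f(Du+tD\varphi)$ must be differentiated carefully — this is where $q\le p+1$ is used, bounding $|Df(Du+tD\varphi)|\lesssim 1+|Du|^{q-1}+|D\varphi|^{q-1}$ with $q-1\le p$ so the integrand is $L^1$ uniformly in small $t$, legitimizing differentiation under the integral sign (alternatively one invokes that Schmidt already establishes this and cites it). Second, the bookkeeping on the bad set: ensuring that the exponent arithmetic ($q-1\le p$, which holds since $q<p+\tfrac{\min\{2,p\}}{2n}<p+1$) really lets every stray power of $|Du-z|$ be absorbed into $|V^{p/2}(Du-z)|^2$, and that the resulting $\Phi_p$ term can be folded into $\sqrt{\Phi_p}\,\omega_L(\Phi_p)$ using $\omega_L^2(t)\ge t$ — here one should note the estimate is only claimed for $\Phi_p$ below a threshold (which is all that is needed, since in the application $\Phi_p\to0$), or else the constant $c$ is allowed to depend on that threshold. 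Everything else is a routine application of Taylor's theorem, Jensen's inequality for the concave function $\omega_L^2$, Cauchy--Schwarz, and the elementary comparison inequalities for $V^{\beta}$ recorded in the Caccioppoli section.
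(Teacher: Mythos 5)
Your proposal is correct and follows essentially the same route as the paper's proof: insert $Df(Du)D\varphi$ and $Df(z)D\varphi$ via the Euler equation, split $B_\rho(x_0)$ into $\{|Du-z|\le 1\}$ and $\{|Du-z|>1\}$, use the modulus $\omega_L$ (Taylor with integral remainder) on the first set and Lemma \ref{Acerbo} together with $q-1\le p$ on the second, then conclude by Cauchy--Schwarz and Jensen with the concavity of $\omega_L^2$ and the property $\omega_L^2(t)\ge t$. The only differences are cosmetic: the paper absorbs the bad-set contribution pointwise (so your threshold caveat on $\Phi_p$ is unnecessary, since $\omega_L(\Phi_p)\ge\sqrt{\Phi_p}$ always gives $\Phi_p\le\sqrt{\Phi_p}\,\omega_L(\Phi_p)$), and it takes the Euler--Lagrange equation for granted where you sketch its justification.
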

\begin{proof}
The proof of a similar result, in \cite{Schmidt}, will be adapted and explicitely repeated for the convenience of the reader.\\
We may assume, without loss of generality, that $x_0=0$ and $\sup\limits_{B_\rho}|D\varphi|=1$. Setting $v(x):=u(x)-zx$, the Euler equation of $F$ gives \begin{equation}\label{undici}\left|\fint_{B_\rho} D^2f(z)(Dv,D\varphi)\,dx \right|\le \fint_{B_\rho} \left| D^2f(z)(Dv,D\varphi)+Df(z)D\varphi-Df(Du)D\varphi\right|\,dx\end{equation}
Now we estimate the integrand on the right-hand side.\\ On the set $\{x\in B_\rho:|Dv|\le 1\}$ we have $|Dv|^2\le 2\left|V^{\frac{p}{2}}(Dv)\right|^2$. Using this, Remark 2 and the concavity of $\omega_L$ we have: 
\begin{multline} \label{dodici}
\left| D^2f(z)(Dv,D\varphi)+Df(z)D\varphi-Df(Du)D\varphi\right| \\ \le \int_0^1 |D^2f(z)-D^2f(z+tDv)|\,dt|Dv| \\ \le \omega_L(|Dv|^2)|Dv|\le c\omega_L\left(\left|V^{\frac{p}{2}}(Dv)\right|^2\right)\left|V^{\frac{p}{2}}(Dv)\right|.
\end{multline} 
On the set $\{x\in B_\rho:|Dv|\ge 1\}$, Lemma \ref{Acerbo} implies
\begin{multline}\label{tredici}
\left| D^2f(z)(Dv,D\varphi)+Df(z)D\varphi-Df(Du)D\varphi\right|\le \\ \le \sup\limits_{|z|\le L+2} |D^2f(z)||Dv|+c|V^{q-1}(Dv)|\le c|Dv|^{\max\{q-1,1\}}\le\\ \le c\left|V^{\frac{p}{2}}(Dv)\right|^2.
\end{multline}
Combining \eqref{undici}, \eqref{dodici} and \eqref{tredici} and noticing that property $3$ of $\omega_L$ stated in Remark $2$ implies that $$\max\left\{\omega_L\left(\left|V^{\frac{p}{2}}(Dv)\right|^2\right)\left|V^{\frac{p}{2}}(Dv)\right|,\left|V^{\frac{p}{2}}(Dv)\right|^2\right\}=\omega_L\left(\left|V^{\frac{p}{2}}(Dv)\right|^2\right)\left|V^{\frac{p}{2}}(Dv)\right|$$ we have
$$\left| \fint_{B_\rho} D^2f(z)(Dv,D\varphi)\,dx\right|\le c\fint_{B_\rho}\omega_L\left(\left|V^{\frac{p}{2}}(Dv)\right|^2\right)\left|V^{\frac{p}{2}}(Dv)\right|\,dx.$$
Now we apply H\"older to obtain
$$\left| \fint_{B_\rho} D^2f(z)(Dv,D\varphi)\,dx\right|\le c \left[\fint_{B_\rho}\omega^2_L\left(\left|V^{\frac{p}{2}}(Dv)\right|^2\right)\right]^{\frac{1}{2}}\left[\fint_{B_\rho} \left|V^{\frac{p}{2}}(Dv) \right|^2\right]^{\frac{1}{2}}$$
Now, by Jensen, using the concavity of $\omega_L^2$, to obtain 
$$\left|\fint_{B_\rho}D^2f(z)(Dv,D\varphi)\,dx\right|\le c\sqrt{\Phi_p}\omega_L(\Phi_p).$$
This completes the proof.

\end{proof}
\begin{lem}\label{Armonic}
	Fix $1<p<\infty$, $0<\lambda \le \Lambda< \infty$ and $\varepsilon>0$. Then there is a $\delta(n,N,p,\Lambda,\lambda,\varepsilon)>0$ such that the following assertion holds:\\
	For all $s\in (0,1]$, for all $\mathcal{A}$ satisfying \eqref{limitatezza} and \eqref{Legendre} and for each $u \in W^{1,p}(B_\rho(x_0);\mathbb{R}^N)$ with:
	\begin{equation*}
	\fint_{B_{\rho}(x_0)} |V^{\frac{p}{2}}(Du)|^2\,dx \le s^2
	\end{equation*}
	and
	\begin{equation*}
	\left| \fint_{B_\rho(x_0)} \mathcal{A}(Du,D\varphi)\,dx\right|\le s\delta\sup\limits_{B_\rho(x_0)}|D\varphi|
	\end{equation*}
	for all smooth $\varphi:B_\rho(x_0)\to \mathbb{R}^N$ with compact support in $B_\rho(x_0)$ there is an $\mathcal{A}$-harmonic function $h \in C^{\infty}_{loc}(B_\rho(x_0),\mathbb{R}^N)$ with 
	\begin{equation*}
	\sup\limits_{B_{\rho/2}(x_0)}|Dh|+\rho\sup\limits_{B_{\rho/2}(x_0)}|D^2h|\le c
	\end{equation*}
	and
	\begin{equation*}
	\fint_{B_{\rho/2}(x_0)}\left|V^{\frac{p}{2}}\left(\frac{u-sh}{\rho}\right) \right|^2\,dx \le s^2\varepsilon.
	\end{equation*}
	Here $c$ denotes a constant depending only on $n,N,p,\Lambda,\lambda$.
	
\end{lem}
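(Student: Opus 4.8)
The plan is to run the classical compactness (contradiction) argument for $\mathcal A$-harmonic approximation, reducing the statement to the interior regularity theory for homogeneous linear elliptic systems with constant coefficients satisfying the Legendre--Hadamard condition \eqref{Legendre}; this is the scheme of \cite{Schmidt}, from which the technical details can be imported. After the affine change $x\mapsto x_0+\rho x$ one reduces to $x_0=0$, $\rho=1$, so that the hypotheses read $\fint_{B_1}|V^{\frac p2}(Du)|^2\,dx\le s^2$ and $\bigl|\fint_{B_1}\mathcal A(Du,D\varphi)\,dx\bigr|\le s\delta\sup_{B_1}|D\varphi|$, and the conclusion is the existence of an $\mathcal A$-harmonic $h$ with $\sup_{B_{1/2}}(|Dh|+|D^2h|)\le c$ and $\fint_{B_{1/2}}|V^{\frac p2}(u-sh)|^2\,dx\le s^2\varepsilon$.

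First I would negate the statement: if it fails for some $\varepsilon>0$ there are $s_j\in(0,1]$, bilinear forms $\mathcal A_j$ satisfying \eqref{limitatezza}--\eqref{Legendre} with the same $\Lambda,\lambda$, and $u_j\in W^{1,p}(B_1;\mathbb R^N)$ with $\fint_{B_1}|V^{\frac p2}(Du_j)|^2\le s_j^2$ and $\bigl|\fint_{B_1}\mathcal A_j(Du_j,D\varphi)\bigr|\le\tfrac{s_j}{j}\sup|D\varphi|$, such that no $\mathcal A_j$-harmonic $h$ with $\sup_{B_{1/2}}(|Dh|+|D^2h|)\le c$ realizes the excess bound ($c$ to be fixed in the next step). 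Setting $w_j:=s_j^{-1}(u_j-(u_j)_{B_1})$ one has $(w_j)_{B_1}=0$, $Dw_j=s_j^{-1}Du_j$ and $\bigl|\fint_{B_1}\mathcal A_j(Dw_j,D\varphi)\bigr|\le\tfrac1j\sup|D\varphi|$. From the elementary equivalences $|V^{\frac p2}(\xi)|^2\simeq\min\{|\xi|^2,|\xi|^p\}$ for $p\le2$ and $\simeq\max\{|\xi|^2,|\xi|^p\}$ for $p\ge2$, property \eqref{prop2} and $s_j\le1$, one checks that $(w_j)$ is bounded in $W^{1,\min\{2,p\}}(B_1;\mathbb R^N)$ (and, for $p<2$, that $Dw_j$ splits as $g_j+b_j$ with $(g_j)$ bounded in $L^2$ and $\|b_j\|_{L^p(B_1)}\to0$). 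Since the forms obeying \eqref{limitatezza}--\eqref{Legendre} with fixed constants form a compact set, after a subsequence $\mathcal A_j\to\mathcal A$ (still admissible), $w_j\rightharpoonup w$ weakly and $w_j\to w$ strongly in $L^{\min\{2,p\}}_{\mathrm{loc}}(B_1)$; passing to the limit in the almost-harmonicity inequality (its left side converges because $\mathcal A_j\to\mathcal A$ and $Dw_j\rightharpoonup Dw$, its right side vanishes) gives $\fint_{B_1}\mathcal A(Dw,D\varphi)\,dx=0$ for all test $\varphi$, i.e. $w$ is $\mathcal A$-harmonic on $B_1$.

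Since $\mathcal A$ obeys \eqref{limitatezza} and \eqref{Legendre}, the regularity theory for homogeneous constant-coefficient elliptic systems (e.g. \cite{Giusti}, \cite{Schmidt}) gives $w\in C^\infty_{\mathrm{loc}}(B_1;\mathbb R^N)$ and an interior bound $\sup_{B_{1/2}}(|Dw|+|D^2w|)\le c=c(n,N,p,\Lambda,\lambda)$, the right side being controlled by the uniform $W^{1,\min\{2,p\}}$-norm of $w$; this is the $c$ fixed in the negation. Because $\mathcal A_j\to\mathcal A$, solving the $\mathcal A_j$-Dirichlet problem with boundary datum the trace of $w$ produces, after adding a constant, $\mathcal A_j$-harmonic competitors $h_j$ converging to $w$ in $C^2_{\mathrm{loc}}(B_1)$ — hence admissible for $j$ large — such that $u_j-s_jh_j=s_j\zeta_j$ on $B_{1/2}$, where $\zeta_j:=w_j-\bigl(h_j-s_j^{-1}(u_j)_{B_1}\bigr)\to0$ in $L^{\min\{2,p\}}(B_{1/2})$. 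Everything then reduces to proving that $\fint_{B_{1/2}}|V^{\frac p2}(s_j\zeta_j)|^2\,dx\le\varepsilon s_j^2$ for $j$ large, which contradicts the choice of $u_j$ and closes the argument.

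The main obstacle is precisely this last estimate, where the non-homogeneity of $|V^{\frac p2}(\cdot)|^2$ forces one to treat $p\ge2$ and $p<2$ separately: the rescaling $u_j\mapsto w_j=s_j^{-1}u_j$ does not normalize the $V$-quantities, so when $s_j\to0$ the competing powers $s_j^2$ and $s_j^p$ must be matched against the gain coming from $\zeta_j\to0$ rather than handled by a plain weak limit as in the quadratic theory. When $p\ge2$ one splits $B_{1/2}$ according to $|s_j\zeta_j|\lessgtr1$, uses $|V^{\frac p2}(s_j\zeta_j)|^2\le c\,s_j^2|\zeta_j|^2$ where $|s_j\zeta_j|\le1$ and $\le c\,(s_j|\zeta_j|)^p$ where $|s_j\zeta_j|>1$, bounds the first piece by $c\,s_j^2\,o(1)$ via the strong $L^2$-convergence, and the second by combining Chebyshev for the measure of $\{|s_j\zeta_j|>1\}$ with the higher integrability of $u_j-s_jh_j$ furnished by the Sobolev embedding of the uniform bound on $(w_j)$; since $s_j\le1$ and $p\ge2$ one has $s_j^p\le s_j^2$ and the powers fit. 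When $p<2$ this is not sufficient — the convergence $\zeta_j\to0$ is only in $L^p$ and now $s_j^p\ge s_j^2$ — and one must upgrade the compactness by a Lipschitz-truncation argument together with the $V$-compatible smoothing operator of Lemma~\ref{Fonseca1}$(9)$, replacing $w_j-w$ by $W^{1,\infty}$-approximants that differ from it only on a small set, testing the equation against them, and absorbing the truncation error; this delicate step is exactly what is carried out in the corresponding proof in \cite{Schmidt}. Once the estimate is established the contradiction follows, and with it the lemma.
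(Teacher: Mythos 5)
The paper never proves Lemma \ref{Armonic}: it is imported without proof from \cite{Schmidt} (Lemma 6.8 there), so there is no internal argument to compare yours against. Your outline is the standard compactness/contradiction proof of $\mathcal{A}$-harmonic approximation that underlies Schmidt's lemma, and its structure is sound (rescaling, normalization $w_j=s_j^{-1}(u_j-(u_j)_{B_1})$, compactness of the admissible forms, passage to an $\mathcal{A}$-harmonic limit, interior estimates for constant-coefficient Legendre--Hadamard systems, and comparison with $\mathcal{A}_j$-harmonic solutions of a Dirichlet problem with smooth datum). Note, however, that you too ultimately delegate the decisive points to \cite{Schmidt}: the sub-quadratic case $p<2$, where the $V$-function is inhomogeneous and the Lipschitz-truncation/smoothing step via Lemma \ref{Fonseca1}(9) is the real content, and the subsequence dichotomy $s_j\to 0$ versus $s_j\to s>0$, without which your claim $\|b_j\|_{L^p}\to 0$ does not hold; so as written your text is an outline with the same external dependence on \cite{Schmidt} that the paper itself adopts by citing the lemma, rather than a self-contained proof.
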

\medskip \medskip
\section{Excess decay estimate}
\begin{prop}
	Let $z_0$ be s.t. $|z_0|>M+1$ and $x_0$ be s.t. $$\lim\limits_{\rho\to 0}\fint_{B_\rho(x_0)}\left|V^{\frac{p}{2}}(Du(x)-z_0)\right|^2=0$$ then $$\Phi_p(u,x_0,\rho)\to 0 \quad \text{ as } \quad \rho\to 0.$$\\
	\begin{proof} Let $(Du)_\rho:=\fint_{B_\rho(x_0)}|Du|$. We have, using \eqref{prop1}, \eqref{comparability} and convexity of $\left|W^{\frac{p}{2}}(z)\right|^2$:
	\begin{multline*}
	\Phi_p(u,x_0,\rho) =\fint_{B_\rho(z_0)}|V^{\frac{p}{2}}[Du-(Du)_\rho]|^2\,dx \le \\ \le c\left[ \fint_{B_\rho(z_0)}|V^{\frac{p}{2}}[Du-z_0]|^2\,dx+|V^{\frac{p}{2}}[z_0-(Du)_\rho]|^2 \right] \le \\ \le c\fint_{B_\rho(z_0)}\left|V^{\frac{p}{2}}[Du-z_0]\right|^2\,dx \to 0
	\end{multline*}
	\end{proof}
\end{prop}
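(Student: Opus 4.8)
The plan is to reduce the excess $\Phi_p(u,x_0,\rho)$, which by definition is centred at the average $(Du)_\rho:=\fint_{B_\rho(x_0)}Du\,dx$, to the quantity appearing in the hypothesis, which is centred at the fixed matrix $z_0$. First I would apply the quasi-triangle inequality \eqref{prop1} for $V^{\frac{p}{2}}$ with $A=Du-z_0$ and $B=z_0-(Du)_\rho$, obtaining
$$\Phi_p(u,x_0,\rho)=\fint_{B_\rho(x_0)}\Big|V^{\frac{p}{2}}\big(Du-(Du)_\rho\big)\Big|^2\,dx\le c\left[\fint_{B_\rho(x_0)}\Big|V^{\frac{p}{2}}(Du-z_0)\Big|^2\,dx+\Big|V^{\frac{p}{2}}\big(z_0-(Du)_\rho\big)\Big|^2\right].$$
The first term on the right tends to $0$ as $\rho\to0^+$ by assumption, so the whole matter is reduced to showing that the constant term $\big|V^{\frac{p}{2}}(z_0-(Du)_\rho)\big|^2$ also tends to $0$, i.e.\ that $(Du)_\rho\to z_0$.

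To this end I would pass to the equivalent quantity $|W^{\frac{p}{2}}|^2$ by means of \eqref{comparability}, taking advantage of the fact --- recorded in the Definition of the Excess --- that $z\mapsto|W^{\frac{p}{2}}(z)|^2$ is convex for every $p\ge1$. Since $z_0-(Du)_\rho=\fint_{B_\rho(x_0)}(z_0-Du)\,dx$, Jensen's inequality applied to this convex function yields
$$\Big|W^{\frac{p}{2}}\big(z_0-(Du)_\rho\big)\Big|^2\le\fint_{B_\rho(x_0)}\Big|W^{\frac{p}{2}}(z_0-Du)\Big|^2\,dx\le c\fint_{B_\rho(x_0)}\Big|V^{\frac{p}{2}}(z_0-Du)\Big|^2\,dx,$$
and the last integral tends to $0$ by hypothesis. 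Applying \eqref{comparability} once more in the reverse direction gives $\big|V^{\frac{p}{2}}(z_0-(Du)_\rho)\big|^2\le c\,\big|W^{\frac{p}{2}}(z_0-(Du)_\rho)\big|^2\to0$, which, combined with the previous display, proves $\Phi_p(u,x_0,\rho)\to0$.

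I do not expect a genuine obstacle here: the statement is really a bookkeeping step whose role is to legitimise re-centring the excess at the fixed matrix $z_0$ --- where the asymptotic quasiconvexity $(A.3')$ from Corollary \ref{corollario} and Lemma \ref{puntiregolari} are available --- instead of at the average $(Du)_\rho$. The only small subtlety is that Jensen's inequality must be applied to the genuinely convex auxiliary function $|W^{\frac{p}{2}}(\cdot)|^2$ rather than to $|V^{\frac{p}{2}}(\cdot)|^2$, which fails to be convex when $p\neq2$; passing between the two quantities costs only the fixed constant from \eqref{comparability}, depending on $p$. As a by-product, though it is not needed for the present statement, Lemma \ref{puntiregolari} then guarantees that $\big|(Du)_\rho\big|>M+1$ for all sufficiently small $\rho$, so that later on the Caccioppoli inequality of Lemma \ref{caccioppolilemma} and the $\mathcal{A}$-harmonic approximation of Lemma \ref{Armonic} may be invoked with the admissible choice $z=(Du)_\rho$.
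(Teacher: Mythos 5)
Your argument is essentially the paper's own proof: both split the excess with the quasi-triangle inequality \eqref{prop1}, then control the term $\left|V^{\frac{p}{2}}\big(z_0-(Du)_\rho\big)\right|^2$ by Jensen's inequality applied to the convex function $z\mapsto\left|W^{\frac{p}{2}}(z)\right|^2$ together with the comparability \eqref{comparability}, which is exactly how the paper absorbs that term into $c\fint_{B_\rho(x_0)}\left|V^{\frac{p}{2}}(Du-z_0)\right|^2\,dx$. Your write-up just makes the Jensen step explicit where the paper leaves it implicit, so there is nothing to correct.
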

Finally, we can prove
\begin{lem}
	Assume $q$ and $p$ are real numbers such that $q<p+\frac{\min\{2,p\}}{2n}$.\\
	Let $f$ satisfy assumptions $(A.1)$, $(A.2)$ and $(A.3)$ for a given $M>0$.\\
	Choose any $L>M+1>0$, $\alpha \in (0,1)$, $z_0 \in \mathbb{R}^{nN}$ such that $|z_0|>M+1$.\\
	Then there are constants $\varepsilon_0>0$, $\theta \in (0,1)$ and a radius $\rho^*>0$ depending on $n,N,L,p,q,\Gamma,\alpha,\gamma,x_0,z_0$ and $\Lambda_L:=\max\limits_{B_{L+2}}|D^2f|$ and with $\varepsilon_0$ depending additionally on $\omega_L$ such that the following holds.\\
	Consider $u$ a $W^{1,p}$-minimizer of $\mathcal{F}$ on $B_\rho(x_0)$, with $\rho<\rho^*$ and $x_0 \in \mathbb{R}^n$ satisfying $$\lim\limits_{\rho\to 0 }\fint_{B_\rho(x_0)} \left|V^{\frac{p}{2}}(Du(x)-z_0)\right|^2=0.$$If the following conditions hold
	\begin{equation}\label{unouno}\Phi_p(u,x_0,\rho)\le\varepsilon_0\end{equation} and \begin{equation}\label{duedue}|(Du)_{x_0,\rho}|\le L\end{equation} 
	then $$\Phi_p(u,x_0,\theta\rho)\le \theta^{2\alpha}\Phi_p(u,x_0,\rho).$$ 
\end{lem}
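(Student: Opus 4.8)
The plan is to run T.~Schmidt's $\mathcal{A}$-harmonic approximation argument, localised around the matrix $z_0$. Put $z:=(Du)_{x_0,\rho}$ and $s:=\sqrt{\Phi_p(u,x_0,\rho)}$, so that $\fint_{B_\rho(x_0)}|V^{p/2}(Du-z)|^2\,dx=s^2\le\varepsilon_0$. Since $\fint_{B_\rho(x_0)}|V^{p/2}(Du-z_0)|^2\to0$, Lemma~\ref{puntiregolari} furnishes $r_1=r_1(x_0,z_0)$ with $|(Du)_{x_0,\rho}|>M+1$ whenever $\rho<r_1$; taking $\rho^*\le r_1$ and using \eqref{duedue} we get $M<|z|\le L$. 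Consequently $\mathcal{A}:=D^2f(z)$ satisfies $|\mathcal{A}|\le\Lambda_L$ and, by Lemma~\ref{lemmanove} (via Corollary~\ref{corollario}), the Legendre--Hadamard condition with $\lambda=2\gamma$; and by Lemma~\ref{dieci} (its hypothesis $q\le p+1$ being part of our standing assumption on $p,q$),
$$\left|\fint_{B_\rho(x_0)}\mathcal{A}(Du-z,D\varphi)\,dx\right|\le c\,s\,\omega_L(s^2)\sup_{B_\rho(x_0)}|D\varphi|.$$

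Next I would apply Lemma~\ref{Armonic} to $w:=u-(u)_{x_0,\rho}-z(x-x_0)$ (so $Dw=Du-z$ and $\fint_{B_\rho(x_0)}|V^{p/2}(Dw)|^2\,dx=s^2$), with this $\mathcal{A}$, this $s$, and a parameter $\varepsilon>0$ still to be fixed: the required smallness $|\fint_{B_\rho(x_0)}\mathcal{A}(Dw,D\varphi)\,dx|\le s\delta\sup|D\varphi|$ follows from the display above as soon as $c\,\omega_L(s^2)\le\delta$, which is arranged by choosing $\varepsilon_0$ small ($\omega_L$ is nondecreasing with $\omega_L(0^+)=0$). This yields an $\mathcal{A}$-harmonic $h$ on $B_\rho(x_0)$ with $\sup_{B_{\rho/2}(x_0)}|Dh|+\rho\sup_{B_{\rho/2}(x_0)}|D^2h|\le c_1$ and $\fint_{B_{\rho/2}(x_0)}|V^{p/2}(\tfrac{w-sh}{\rho})|^2\,dx\le s^2\varepsilon$. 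Set $\tilde z:=z+sDh(x_0)$ and $\tilde\zeta:=(u)_{x_0,\rho}+sh(x_0)$; for $\varepsilon_0$ small one still has $M<|\tilde z|<L+1$, since $|z|>M+1$ and $|sDh(x_0)|\le c_1\sqrt{\varepsilon_0}$. Writing $\tilde v:=u-\tilde\zeta-\tilde z(x-x_0)$, a direct computation gives $\tilde v=(w-sh)+s\,R_h$, where $R_h(x):=h(x)-h(x_0)-Dh(x_0)(x-x_0)$, so $|R_h|\le\tfrac{c_1}{2\rho}|x-x_0|^2\le 2c_1\theta^2\rho$ on $B_{2\theta\rho}(x_0)$.

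The heart of the matter is to bound $\Phi_p(u,x_0,\theta\rho)$ by applying the Caccioppoli inequality (Lemma~\ref{caccioppolilemma}) on $B_{2\theta\rho}(x_0)$, with $0<\theta<\tfrac14$, to the affine map $x\mapsto\tilde\zeta+\tilde z(x-x_0)$, which (with the auxiliary function $t\mapsto t+t^{q/p}$ of that statement) gives
$$\Phi_p(u,x_0,\theta\rho,\tilde z)\le c\left[A+A^{q/p}+\Phi_p(u,x_0,2\theta\rho,\tilde z)^{q/p}\right],\qquad A:=\fint_{B_{2\theta\rho}(x_0)}\left|V^{p/2}\!\left(\tfrac{\tilde v}{2\theta\rho}\right)\right|^2\,dx.$$
Using \eqref{prop1}, \eqref{prop2}, the inclusion $B_{2\theta\rho}(x_0)\subset B_{\rho/2}(x_0)$ with volume ratio $(4\theta)^{-n}$, the above bound for $\fint|V^{p/2}(\tfrac{w-sh}{\rho})|^2$, and the estimate $|\tfrac{sR_h}{2\theta\rho}|\le c_1s\theta\le1$ on $B_{2\theta\rho}$, one obtains $A\le c\,\theta^{-\sigma}s^2\varepsilon+c\,s^2\theta^2$ with $\sigma:=n+\max\{2,p\}$. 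Likewise, from $Du-\tilde z=(Du-z)-sDh(x_0)$, together with \eqref{prop1}, the bound $|V^{p/2}(sDh(x_0))|^2\le cs^2$, and the volume ratio $(2\theta)^{-n}$ between $B_\rho$ and $B_{2\theta\rho}$, one gets $\Phi_p(u,x_0,2\theta\rho,\tilde z)\le c\,\theta^{-n}s^2$. Finally, since the mean minimises the $V^{p/2}$-excess up to a fixed constant, $\Phi_p(u,x_0,\theta\rho)\le c\,\Phi_p(u,x_0,\theta\rho,\tilde z)$. Collecting everything and dividing by $s^2=\Phi_p(u,x_0,\rho)$—noting that every term carrying the exponent $q/p$ acquires a factor $s^{2(q-p)/p}\le\varepsilon_0^{(q-p)/p}$ because $q>p$—yields
$$\frac{\Phi_p(u,x_0,\theta\rho)}{\Phi_p(u,x_0,\rho)}\le c\,\theta^{2}+c\,\theta^{-\sigma}\varepsilon+c\,\varepsilon_0^{(q-p)/p}\Big[(\theta^{-\sigma}\varepsilon+\theta^{2})^{q/p}+\theta^{-nq/p}\Big].$$

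It remains to fix the constants in the right order. First choose $\theta\in(0,\tfrac14)$ with $c\,\theta^{2}\le\tfrac{1}{3}\theta^{2\alpha}$ (possible since $2\alpha<2$); this is the only place the exponent degrades to $2\alpha$, and it is why $\alpha<1$ is needed. With $\theta$ frozen, choose $\varepsilon\in(0,1)$ with $c\,\theta^{-\sigma}\varepsilon\le\tfrac{1}{3}\theta^{2\alpha}$; then $\delta=\delta(n,N,p,\Lambda_L,\gamma,\varepsilon)$ of Lemma~\ref{Armonic} is determined. Finally choose $\varepsilon_0\in(0,1)$ small enough that simultaneously $c\,\omega_L(\varepsilon_0)\le\delta$, that $c_1\sqrt{\varepsilon_0}$ lies below every smallness threshold used above (in particular $M<|\tilde z|<L+1$ and $c_1s\theta\le1$), and that $c\,\varepsilon_0^{(q-p)/p}[(\theta^{-\sigma}\varepsilon+\theta^{2})^{q/p}+\theta^{-nq/p}]\le\tfrac{1}{3}\theta^{2\alpha}$; then set $\rho^*:=r_1(x_0,z_0)$. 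With these choices the right-hand side of the last display is $\le\theta^{2\alpha}$, which is the assertion. I expect the main obstacle to be the bookkeeping in the two intermediate estimates at radius $2\theta\rho$: one must push every error term through the scaling inequalities \eqref{prop1}--\eqref{prop2} together with the volume ratios so that, after the three-stage choice $\theta\to\varepsilon\to\varepsilon_0$, each of them is genuinely dominated by $\theta^{2\alpha}\,\Phi_p(u,x_0,\rho)$; the structural hypothesis $q<p+\frac{\min\{2,p\}}{2n}$ enters this argument only indirectly, through the validity of the Caccioppoli inequality of Lemma~\ref{caccioppolilemma} and through $q>p$, which makes the super-linear terms absorbable.
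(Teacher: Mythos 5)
Your proposal is correct and follows essentially the same route as the paper: Lemma \ref{puntiregolari} to ensure $|z|>M+1$, the almost-$\mathcal{A}$-harmonicity of Lemma \ref{dieci} combined with Lemma \ref{Armonic}, a Taylor expansion of $h$, the Caccioppoli inequality at the shifted affine map $\tilde\zeta+\tilde z(x-x_0)$, absorption of the superlinear $q/p$-terms by smallness of $\varepsilon_0$, and the comparison of $\Phi_p(\theta\rho)$ with $\Phi_p(\theta\rho,\tilde z)$. The only cosmetic difference is that the paper fixes $\varepsilon:=\theta^{n+2+\max\{2,p\}}$ outright while you keep $\varepsilon$ free and fix it after $\theta$ in your three-stage choice, which is equivalent (and you may add one line disposing of the trivial case $\Phi_p(u,x_0,\rho)=0$ before dividing by $s^2$).
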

\begin{proof}
	Let $z_0$ be such that $|z_0|>M+1$ and $x_0$ any point such that $\lim\limits_{\rho\to 0 }\fint_{B_\rho(x_0)} |Du(x)-z_0|^p=0$. In what follows, for simplicity of notation, we assume that $x_0=0$ and we abbreviate $$z=(Du)_\rho:=\fint_{B_\rho}Du\,dx$$ and $$\Phi_p(\cdot):=\Phi_p(u,0,\cdot).$$ where $\rho>0$ is any positive value small enough (smaller than a $\rho^*$ that will be determined throughout the proof).\\ Since the claim is obvious in the case $\Phi_p(\rho)=0$ we can assume $\Phi_p(\rho)\not=0$.\\
	Setting $$w(x):=u(x)-zx\quad \text{ and } \quad s:=\sqrt{\Phi_p(\rho)}$$ we have by definition of $\Phi_p(\rho)$, $$\fint_{B_\rho}|V^{\frac{p}{2}}(Dw)|^2\,dx=s^2=\Phi_p(\rho).$$ Next we will approximate by $\mathcal{A}$--harmonic functions, where $\mathcal{A}:=D^2f(z)$.\\
	If we choose $\rho<\rho^*:=r_1(z_0)$ as in Lemma \ref{puntiregolari}, we have $|z|>M+1$, hence, from $|\mathcal{A}|\le \max\limits_{B_{L+2}}|D^2f|=:\Lambda_L$ and Lemma \ref{lemmanove} we deduce that $\mathcal{A}$ satisfies \eqref{limitatezza} with a bound $\Lambda_L$ and \eqref{Legendre} with ellipticity constant $2\gamma$. Lemma \ref{dieci} yields the estimate:
	\begin{equation*}
	\left| \fint_{B_\rho}\mathcal{A}(D\omega,D\varphi)\,dx\right| \le sC_2\omega_{L}\left(\Phi_p(\rho)\right)\sup\limits_{B_\rho}|D\varphi|
	\end{equation*}
	for all $\rho<\rho^*$ and for all smooth functions $\varphi:B_{\rho}\to \mathbb{R}^N$ with compact support in $B_\rho$, where $C_2$ is a positive constant depending on $n,N,p,q,\Gamma,L,\Lambda_L$.\\ For $\varepsilon>0$ to be specified later, we fix the corresponding constant $\delta(n,N,p,\Lambda_L,\gamma,\varepsilon)>0$ from Lemma \ref{Armonic}.\\ Now, let $\varepsilon_0=\varepsilon_0(n,N,p,\Lambda_L,\gamma,\varepsilon)$ be small enough so that \eqref{unouno} implies:
	\begin{equation} \label{settepuntoquindici}
	C_2\omega_{L}(\Phi_p(\rho))\le \delta
	\end{equation}
	\begin{equation}\label{settepuntosedici}
	s=\sqrt{\Phi_p(\rho)}\le 1.
	\end{equation}
	We apply Lemma \ref{Armonic}. The lemma ensures the existence of an $\mathcal{A}$-harmonic function $h\in C^{\infty}_{loc}(B_\rho;\mathbb{R}^N)$ such that
	\begin{equation*}
	\sup\limits_{B_{\rho/2}}|Dh|+\rho\sup\limits_{B_{\rho/2}}|D^2h|\le c
	\end{equation*}
	where $c=c(n,N,p,\Lambda_L,\gamma)$ and
	\begin{equation}
	\fint_{B_{\rho/2}} \left| V^{\frac{p}{2}}\left(\frac{w-sh}{\rho}\right)\right|^2\,dx\le s^2\varepsilon.
	\end{equation}
	Now fix $\theta\in (0,1/4]$. Taylor expansion implies the estimate:
	\begin{equation*}
	\sup\limits_{x \in B_{2\theta\rho}}|h(x)-h(0)-Dh(0)x|\le \frac{1}{2}(2\theta\rho)^2 \sup\limits_{x \in B_{\rho/2}} |D^2h| \le c\theta^2\rho.
	\end{equation*}
	Using \eqref{prop1} and \eqref{prop2} together with what we have obtained we get:
	\begin{multline*}
	\fint_{B_{2\theta\rho}}\left|V^{\frac{p}{2}}\left(\frac{w(x)-sh(0)-sDh(0)x}{2\theta\rho}\right)\right|^2\,dx \le \\ \le c\Big[\theta^{-n-\max\{2,p\}}\fint_{B_{\rho/2}}\left|V^{\frac{p}{2}}\left(\frac{w-sh}{\rho}\right)\right|^2\,dx+\\+ \fint_{B_{2\theta\rho}} \left|V^{\frac{p}{2}}\left(s\frac{h(x)-h(0)-Dh(0)x}{2\theta\rho}\right)\right|^2\,dx\Big]\le\\ \le c\left[\theta^{-n-\max\{2,p\}}s^2\varepsilon+\left|V^{\frac{p}{2}}(\theta s)\right|^2\right]\le \\ \le c\left[\theta^{-n-\max\{2,p\}}s^2\varepsilon+\theta^{2}s^2\right]
	\end{multline*}
	Setting $\varepsilon:=\varepsilon(\theta)=\theta^{n+2+\max\{2,p\}}$ (so, remember that $\varepsilon$ and hence $\delta$ and $\varepsilon_0$ depend on whatever $\theta$ we wish to choose) and recalling the definitions of $w$ and $s$ we have: \begin{equation}\label{ciccio}
	\fint_{B_{2\theta\rho}} \left| V^{\frac{p}{2}}\left(\frac{u(x)-zx-s(h(0)+Dh(0)x)}{2\theta\rho}\right)\right|^2\,dx\le c\theta^2\Phi_p(\rho).
	\end{equation}
	On the other hand, we remark that, using the definition of $s$ and properties of $h$:
	\begin{equation} \label{settepuntodiciannove}
	|sDh(0)|^2\le c^2\Phi_p(\rho)
	\end{equation}
	We can take $\varepsilon_0$ small enough such that \eqref{unouno} implies also: $$s\le \frac{1}{c}$$
	and that would imply \begin{equation}\label{settepuntoventidue}
	|sDh(0)|^2\le 1.
	\end{equation}\\
	Using this fact together with \eqref{settepuntodiciannove} and  \eqref{prop1} we get 
	\begin{multline} \label{ciccio2}
	\Phi_p(2\theta\rho,z+sDh(0))\le\\ \le c\left[(2\theta)^{-n}\left(\fint_{B_\rho}|V^{\frac{p}{2}}(Du-z)|^2\,dx+|V^{\frac{p}{2}}(sDh(0))|^2\right)\right]\le\\ \le c\left[\theta^{-n}\left(\Phi_p(\rho)+|sDh(0)|^2\right)\right] \le c\theta^{-n}\Phi_p(\rho).
	\end{multline}
	Now we need to use \eqref{caccioppoli} with $\zeta=sh(0)$ and $z+sDh(0)$ instead of $z$, and we can be sure that $|z+sDh(0)|>M$ because $|sDh(0)|\le 1$.\\ 
	Now, we can combine \eqref{ciccio} and \eqref{ciccio2} and Caccioppoli inequality \eqref{caccioppoli} with $\zeta=sh(0)$ and $z+sDh(0)$ instead of $z$, and we get 
	\begin{equation} \label{settepuntoventuno}
	\Phi_p(\theta\rho,z+sDh(0))\le c\left[\theta^2\Phi_p(\rho)+\theta^\frac{2q}{p}\Phi_p(\rho)^{\frac{q}{p}}+\theta^{-n\frac{q}{p}}\Phi_p(\rho)^{\frac{q}{p}}\right].	\end{equation}
	Thereby the condition $|z+sDh(0)|\le L+1$ of Lemma \ref{caccioppolilemma} can be deduced from \eqref{settepuntoventidue}.\\
	Now, if $\varepsilon_0$ is chosen small enough, depending on $\theta$, \eqref{unouno} implies the following:
 	\begin{equation}\label{settepuntoventitre}
	\theta^{-n\frac{q}{p}}\Phi_p(\rho)^{\frac{q-p}{p}}\le \theta^2,
	\end{equation}
	and from the fact that $\theta\le 1$ we have
	$$\Phi_p(\theta\rho,z+sDh(0))\le c\theta^2\Phi_p(\rho).$$
	For $q=p$, however, the last inequality holds without further assumptions since the last term on the right hand side of \eqref{settepuntoventuno} does not occur (see Remark \ref{remark}).\\
	Using Lemma 6.2 in \cite{Schmidt} (written in the same notation as ours except for $A$ instead of $z$) we deduce from the previous inequality:
	\begin{equation}\label{settepuntoventiquattro}
	\Phi_p(\theta\rho)\le C_3\theta^2\Phi_p(\rho),
	\end{equation}
	where $C_3>0$ depends on $n,N,p,q,\Gamma,\gamma,\Lambda_L,L$.\\ Finally, we choose $\theta \in (0,\frac{1}{4}]$ (depending on $\alpha$ and whatever $C_3$ depends on) small enough such that \begin{equation}\label{settepuntoventicinque}
	C_3\theta^2\le \theta^{2\alpha}
	\end{equation}
	holds, and $\varepsilon_0$ small enough such that \eqref{settepuntoquindici}, \eqref{settepuntosedici}, \eqref{settepuntoventidue}, \eqref{settepuntoventitre} follow from \eqref{unouno}. Taking into account \eqref{settepuntoventiquattro} and \eqref{settepuntoventicinque} the proof of the proposition is complete.
\end{proof}
The following adaptation of (\cite{Schmidt}, Lemma 7.10) is then a trivial consequence of this last lemma.
\begin{lem} \label{iterato}
	Assume $q$ and $p$ are real numbers such that $q<p+\frac{\min\{2,p\}}{2n}$.\\
	Let $f$ satisfy assumptions $(A.1)$, $(A.2)$ and $(A.3)$ for a given $M>0$.\\
	Choose any $L>2M+2>0$, $\alpha \in (0,1)$, $z_0 \in \mathbb{R}^{nN}$ such that $|z_0|>M+1$.\\
	Then there is a constant $\tilde{\varepsilon}_0>0$ and a radius $\rho^*>0$ depending on $n,N,L,p,q,\Gamma,\alpha,\gamma,x_0,z_0$ and $\Lambda_L:=\max\limits_{B_{L+2}}|D^2f|$ and with $\tilde{\varepsilon}_0$ depending additionally on $\omega_L$ such that the following holds.\\
	Consider $u$ a $W^{1,p}$-minimizer of $\mathcal{F}$ on $B_\rho(x_0)$, with $\rho<\rho^*$ and $x_0 \in \mathbb{R}^n$ satisfying $$\lim\limits_{\rho\to 0 }\fint_{B_\rho(x_0)} |Du(x)-z_0|^p=0.$$If the following conditions hold
	\begin{equation}\label{unounouno}\Phi_p(u,x_0,\rho)\le\tilde{\varepsilon}_0\end{equation} and \begin{equation}\label{dueduedue}|(Du)_{x_0,\rho}|\le \frac{L}{2}\end{equation} 
	then there is a constant $c$ depending on $n,N,L,p,q,\Gamma,\alpha,\gamma,x_0,z_0$ such that $$\Phi_p(u,x_0,r)\le c\left(\frac{r}{\rho}\right)^{2\alpha}\Phi_p(u,x_0,\rho)$$ for any $r<\rho$. 
\end{lem}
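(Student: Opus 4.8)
The plan is to iterate the one-step excess-decay estimate just proved. Assume without loss of generality that $x_0=0$, and abbreviate $(Du)_r:=\fint_{B_r}Du\,dx$ and $\Phi_p(r):=\Phi_p(u,0,r)$. Apply the preceding lemma with the given $L$, $\alpha$ and $z_0$ to obtain constants $\varepsilon_0>0$, $\theta\in(0,1)$ and a radius $\rho^*>0$; the value of $\tilde\varepsilon_0$ will be fixed at the end, subject at least to $\tilde\varepsilon_0\le\varepsilon_0$. The heart of the argument is to prove, by induction on $j\ge 0$, the two assertions
$$\Phi_p(\theta^j\rho)\le\theta^{2\alpha j}\,\Phi_p(\rho)\qquad\text{and}\qquad|(Du)_{\theta^j\rho}|\le L .$$
Once these hold for every $j$, the estimate for arbitrary $r<\rho$ follows by comparing $r$ with the nearest dyadic scale, as in the final paragraph.

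For $j=0$ the two assertions are exactly \eqref{unounouno}--\eqref{dueduedue} (note $L/2\le L$; the requirement $L>2M+2$ makes $L/2>M+1$, which is consistent with the lower bound on the averages that Lemma \ref{puntiregolari} forces at small scales). Suppose the two assertions hold for all indices $\le j$. Then $\Phi_p(\theta^j\rho)\le\Phi_p(\rho)\le\tilde\varepsilon_0\le\varepsilon_0$ and $|(Du)_{\theta^j\rho}|\le L$, so the preceding lemma applies at radius $\theta^j\rho<\rho<\rho^*$ and gives $\Phi_p(\theta^{j+1}\rho)\le\theta^{2\alpha}\Phi_p(\theta^j\rho)\le\theta^{2\alpha(j+1)}\Phi_p(\rho)$, the first assertion at level $j+1$. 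For the second I estimate the drift of the averages:
$$\bigl|(Du)_{\theta^{j+1}\rho}-(Du)_{\theta^j\rho}\bigr|\le\fint_{B_{\theta^{j+1}\rho}}\bigl|Du-(Du)_{\theta^j\rho}\bigr|\,dx\le\theta^{-n}\fint_{B_{\theta^j\rho}}\bigl|Du-(Du)_{\theta^j\rho}\bigr|\,dx .$$
By the elementary bound $|z|\le c\max\{|V^{\frac{p}{2}}(z)|,|V^{\frac{p}{2}}(z)|^{2/p}\}$ (a consequence of \eqref{prop2}), Jensen's inequality, and $\Phi_p(\theta^j\rho)\le\tilde\varepsilon_0\le1$, the last average is at most $c\,\Phi_p(\theta^j\rho)^{\mu}$ with $\mu:=1/\max\{p,2\}\in(0,\tfrac{1}{2}]$. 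Feeding in the first assertion at the levels $\le j$ and summing the geometric series of ratio $\theta^{2\alpha\mu}<1$,
$$|(Du)_{\theta^{j+1}\rho}|\le|(Du)_\rho|+\sum_{i=0}^{j}\bigl|(Du)_{\theta^{i+1}\rho}-(Du)_{\theta^i\rho}\bigr|\le\frac{L}{2}+c\,\theta^{-n}\,\frac{\tilde\varepsilon_0^{\mu}}{1-\theta^{2\alpha\mu}} ,$$
which is $\le L$ provided $\tilde\varepsilon_0$ is small enough in terms of $\theta$ (hence of $\alpha$ and the data). This closes the induction.

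It remains to pass from dyadic scales to a general radius $r<\rho$. Choose $j\ge 0$ with $\theta^{j+1}\rho<r\le\theta^j\rho$. Using \eqref{prop1}, the comparability \eqref{comparability} and the convexity of $z\mapsto|W^{\frac{p}{2}}(z)|^2$, one has $\Phi_p(r)\le c\,\Phi_p\bigl(u,0,r,(Du)_{\theta^j\rho}\bigr)$, and since $B_r\subset B_{\theta^j\rho}$ enlarging the domain of integration only costs a volume factor $(\theta^j\rho/r)^n<\theta^{-n}$; hence
$$\Phi_p(r)\le c\,\theta^{-n}\,\Phi_p\bigl(u,0,\theta^j\rho,(Du)_{\theta^j\rho}\bigr)=c\,\theta^{-n}\,\Phi_p(\theta^j\rho)\le c\,\theta^{-n}\theta^{2\alpha j}\Phi_p(\rho) .$$
Since $\theta^{j}\le\theta^{-1}(r/\rho)$, this yields $\Phi_p(r)\le c\,\theta^{-n-2\alpha}(r/\rho)^{2\alpha}\Phi_p(\rho)$, i.e. the claim with a constant $c=c(n,N,L,p,q,\Gamma,\alpha,\gamma,x_0,z_0)$.

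The only point that is not purely mechanical is keeping $|(Du)_{\theta^j\rho}|\le L$ along the iteration: this is precisely what the slack between $L/2$ and $L$ in the hypothesis, together with the smallness of $\tilde\varepsilon_0$ relative to $\theta$, is there to guarantee, exploiting that the total drift $\sum_i|(Du)_{\theta^{i+1}\rho}-(Du)_{\theta^i\rho}|$ is geometrically small because the excesses $\Phi_p(\theta^i\rho)$ decay geometrically. The remaining ingredients --- the one-step decay itself and the comparison of nearby radii --- are routine.
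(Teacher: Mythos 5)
Your proof is correct and follows exactly the route the paper intends: the paper states this lemma without proof, calling it a ``trivial consequence'' of the one-step decay lemma (following Schmidt's Lemma 7.10), and your argument is precisely that standard iteration --- inductively propagating the excess decay at dyadic scales $\theta^j\rho$, controlling the drift of the averages $(Du)_{\theta^j\rho}$ by the geometric decay of $\Phi_p$ (which is where the slack between $L/2$ and $L$ and the smallness of $\tilde\varepsilon_0$ enter), and then interpolating to arbitrary radii $r<\rho$ at the cost of a $\theta$-dependent constant. Your write-up simply supplies the details the paper leaves implicit, and the individual estimates (the bound $\fint|Du-(Du)_{\theta^j\rho}|\le c\,\Phi_p(\theta^j\rho)^{1/\max\{2,p\}}$, and the comparison $\Phi_p(r)\le c\,\theta^{-n}\Phi_p(\theta^j\rho)$ via the quasi-minimality of the mean) are sound.
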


\noindent
\section*{Regularity}
Now we are able to prove our main result 
\begin{proof}[Proof of Theorem \ref{maintheorem}]
Let $x_0$ be such that $\exists z_0: |z_0|>M+1$ with the property that $$\fint_{B_\rho(x_0)}|Du-z_0|^p \to 0 \text{ as } \rho\to 0.$$ and choose any~$\alpha\in (0,1)$ and $L=4|z_0|>M$.\\ Then there is $r_2>0$ small enough such that $|\fint_{B_\rho(x_0)} Du|<L/2$ for all $\rho<r_2$ and, since $\Phi_p(u,x_0,\rho)\to 0$ as $\rho\to 0^+$, there is $r_3$ such that, for all $\rho<r_3$, $\Phi_p(u,x_0,\rho)<\varepsilon_0$.\\ Applying lemma \ref{iterato} we have that $Du$ belongs to the Morrey-Campanato space $L^{\lambda,2}$ with $\lambda=2\alpha+n>n$ so that, because of the continuous immersion \cite{Campanato} $L^{\lambda,2}\hookrightarrow C^{0,\frac{2\alpha+n-n}{2}}=C^{0,\alpha}$ we obtain that $Du \in C^{0,\alpha}$ and so $u \in C^{1,\alpha}(B_\rho(x_0))$ choosing $\rho<\min\{\rho^*,r_2,r_3\}$. So $x_0\in \Reg(u)$.\\
Of course $\Reg(u)$ is an open set by definition.\\
We will now argue by contradiction to prove that it is dense. Assume there is a point $x \in \Omega$ and a radius $r>0$ such that $B_r(x)$ is entirely outside $\Reg(u)$. Since $Du \in L^p \subseteq L^1$, by Lebesgue-Besicovitch theorem, for almost all points $y$ in $B_r(x)$ this would mean that $\lim\limits_{r \to 0} \fint_{B_r(y)} |Du| < M$ and so $|Du|$ is essentially bounded by $M$ in $B_r(x)$, which contradicts the hypothesis that $B_r(x)$ is outside $\Reg(u)$.
\end{proof}

\end{document}